\def\ps@myheadings{\let\@mkboth\@gobbletwo
  \def\@oddhead{{\slshape\rightmark}\hfil{\footnotesize\thepage}}%{\hfill\hbox{}\rightmark}
  \def\@oddfoot{}
  \def\@evenhead{{\footnotesize\thepage}\hfil\slshape\leftmark}%{\leftmark\hbox{}\hfill}
  \def\@evenfoot{}
  \def\sectionmark##1{}\def\subsectionmark##1{}
}
\def\runninghead#1#2{\pagestyle{myheadings}
\markboth{{\protect\footnotesize\it{\quad #1}}}
{{\protect\footnotesize\it{#2\quad}}}}
\newtheorem{theorem}{Theorem}[section]
\newtheorem{lemma}[theorem]{Lemma}
\newtheorem{proposition}[theorem]{Proposition}
\newtheorem{corollary}[theorem]{Corollary}
\newtheorem{definition}[theorem]{Definition}
\newtheorem{remark}[theorem]{Remark}
\numberwithin{equation}{section}
\def\cB{\mathcal B}
\def\cE{\mathcal E}
\def\cF{\mathcal F}
\def\cH{\mathcal H}
\def\cS{\mathcal S}
\def\bD{\mathbb D}
\def\bE{\mathbb E}
\def\bP{\mathbb P}
\def\bR{\mathbb R}
\newcommand{\slantfrac}[2]
{\mbox{
${}^#1{\mskip -5mu/\mskip -3mu}_{#2}$
}}
\def\uno{\mathds{1}}
\begin{document}

%
%%%%%%%%%%%%%%%%%%%%%%%%%%%%%%% GEOMETRIA
%
\pagestyle{plain}
\thispagestyle{empty}
\setlength{\textheight}{7.7truein}    %FOR 2ND PAGE ONWARDS
\runninghead{S.\ Bonaccorsi and L.\ Tubaro and M.\ Zanella}{Surface measures and i.b.p.\ formula on levels sets induced by the Brownian motion in $\bR^n$}

%
%%%%%%%%%%%%%%%%%%%%%%%%%%%%%%% INTESTAZIONE
%
\hfill{December 22, 2018}
\par\vskip 2\baselineskip
\par\rm
%
%%%%%%%%%%%%%%%%%%%%%%%%%%%%%%% TITOLO
%
\noindent{\LARGE \bf Surface measures and integration by parts \\ formula  on levels sets induced by functionals \\ of the Brownian motion in $\bR^n$}
\par\vskip 2\baselineskip
\par
%
%%%%%%%%%%%%%%%%%%%%%%%%%%%%%%% AUTORI
%
\noindent{\large \sc Stefano Bonaccorsi$^{a}$, Luciano Tubaro$^{a}$ and Margherita Zanella$^{b}$
}
\par
\vskip 1\baselineskip
\noindent{$^{a}$Dipartimento di Matematica, Universit\`a di Trento \\ via Sommarive 14, 38123 POVO TN, Italy \\ E-mail: {\tt stefano.bonaccorsi@unitn.it}, {\tt luciano.tubaro@unitn.it}}
\par\vskip1\baselineskip
\noindent{$^{b}$Dipartimento di Economia e Finanza, Universit\`a LUISS Guido Carli \\ viale Romania 32, Roma, Italy \\ E-mail: \tt mzanella@luiss.it}

\par\vskip1\baselineskip

\par
\vskip 1\baselineskip

%
%%%%%%%%%%%%%%%%%%%%%%%%%%%%%%% DATA
%
%\par\hfill{\sl \today}
%\par
%\vskip 1\baselineskip
%
%%%%%%%%%%%%%%%%%%%%%%%%%%%%%%% FINE INTESTAZIONE

\begin{center}
\begin{minipage}{.8\textwidth}
\section*{Abstract}
{On the infinite dimensional space $E$ of continuous paths from $[0,1]$ to $\bR^n$, $n \ge 3$, endowed with the Wiener measure $\mu$,
we construct a surface measure defined on level sets of the $L^2$-norm of  $n$-dimensional processes that are solutions to a general class of stochastic differential equations, and provide an integration by parts formula involving this surface measure.
%\textcolor{blue}{
%The construction of a surface measure in infinite dimensional spaces is an important topic in literature.} \textcolor{red}{(Io non saprei fare una breve panoramica della letteratura esistente e dei diversi approcci, volete scrivere qualcosa tu e Luciano?)}
We follow the approach to surface measures in Gaussian spaces proposed via techniques of Malliavin calculus in \cite{Airault1988}.}
\end{minipage}
\end{center}

\par
\vskip 1\baselineskip

\section{Introduction}

Let $E = C([0,1]; \bR^n)$ denote the Banach space of continuous functions from $[0,1]$ to $\bR^n$, endowed with the sup-norm $\displaystyle \|f\|_\infty = \sup_{[0,1]} |f(x)|$. We denote by $\cE = \cB(E)$ the $\sigma$-field of Borel measurable subsets of $E$.
Also, we introduce the Hilbert space $H = L^2(0,1;\bR^n)$ of square integrable measurable functions.
\\
Let us fix the notation we shall use in the sequel. The norm in $\bR^n$ is denoted by $|x|$ and the scalar product as $\langle x,x \rangle_{\bR^n}$.
The (equivalent) $L^1$-norm in $\bR^n$ is $|x|_1 = \sum |x_i|$.
 In the infinite dimensional spaces $E$ and $H$ we denote the norm respectively by $\|x\|_H$, $\|x\|_E$. Finally, the scalar product in $H$ is $\langle x,x \rangle_H$. By $E^*$ we we denote the dual of $E$.

It is known that given a probability space $(\Omega, \cF, \bP)$, a process $B = \{B(t),\, t \in [0,1]\}$  is a standard $n$-dimensional Brownian motion if it a centered Gaussian process with covariance function $\bE[\langle B(t), B(s)\rangle] = (s \wedge t)I$, $I$ being the identity matrix in $\bR^n$.
This process induces a Gaussian measure $\mu$ on the space of trajectories $(E, \cE)$. This measure is known as the {\em Wiener measure};  the process
$B(t)(x) = x(t)$ on the probability space $(E, \cE, \mu)$ will be denoted the standard $n$-dimensional Brownian motion.

On the space $E$ we introduce the Malliavin derivative $D$ with domain $\bD^{1,p}$ (that is the closure in $L^p(E, \mu)$ of the class of smooth random variables) see for instance \cite{Nualart2006, Bogachev1998, DaPrato2014a, DaPrato2014}. 
In Section \ref{s.malliavin} below we explain its construction in more details.
The adjoint operator of the Malliavin derivative operator $D$ having domain $\bD^{1,p}$ is the divergence operator, denoted as usual by $\delta$, having domain   $D_q(\delta)$, where $q = p'$ is the adjoint exponent of $p$.
$\delta$ coincides with the Skorohod integral with respect to the Brownian motion $B$. 

In addition to the Sobolev spaces $\bD^{1,p}$, we shall consider the spaces $UC_b(E)$ of uniformly continuous and bounded functions\footnote{%
we use indifferently the terms function, functional or random variable to denote a measurable mapping $F: E \to \bR$} % 
from $E$ to $\bR$ and $UC_b^1(E)$, of uniformly continuous and bounded functions which are Fr\'echet differentiable, with an uniformly continuous and bounded derivative.
\\
In the sequel, we simplify the notation to $UC_b$, $UC_b^1$, since no confusion may arise.

\vskip 1\baselineskip

Let $u \in L^p(E,\mu;H)$ be a stochastic process, indexed by $t \in [0,1]$, taking values in $\bR^n$. The simplest example of such processes is, obviously,
the Brownian motion $B$:
\begin{align*}
B(t)(x) = x(t), \qquad x \in E,\ t \in [0,1].
\end{align*}
In this paper we introduce the functional $g:L^p(E,\mu;H) \rightarrow \mathbb{R}$ which associates to any such process the random variable
\begin{align}\label{eq:gintro}
g(u)(x) = \frac12 \|u(x)\|^2_H = \frac12 \, \int_0^1 |u(x)(t)|^2 \, {\rm d}t.
\end{align}
In case $u = B$, we shall simply write $g(x) = g(B)(x) = \frac 12\|x\|^2_H$.
\newline 
The aim of this paper is to construct the surface measure induced by $\mu$ on the level sets $\{ g = r \}$ {and provide an integration by parts formula involving this surface measure}. 
We shall mention here that, since the domain $\{g < r\}$ is a convex open set in $E$, our construction is related to that of the recent paper \cite{Addona2018}.
In particular, the integration by parts formula that we obtain in Proposition \ref{pr:ibpf} is related to formula (1) in \cite{Addona2018}. 
Notice however that our construction is quite different. For instance, they choose the measure $\sigma$ on the level sets of $g$ by appealing to the construction of \cite{Feyel1992} to fix a reference surface measure to use in the integration by parts formula.
On the other hand,  we construct the measure $\sigma$ by following the approach initiated by Airault and Malliavin \cite{Airault1988}.
\\
Let $X \in L^1(E,\mu)$ be a random variable (more stringent assumptions on $X$ will be necessary, compare Section \ref{sez3}). Then we define the function
\begin{align}\label{e:i2}
F_X(r) = \int_{\{g(u) < r\}} X(x) \, \mu({\rm d}x), \qquad r \in \bR; %, \qquad X \in L^1(E,\mu);
\end{align}
if $F_X$ is differentiable at $r$, its derivative $F'_X(r)$ is candidate to be a surface integral
\begin{align}\label{e:i3}
F'_X(r) = \int_{\{g(u) = r\}} X(x) \, \sigma_r({\rm d}x),
\end{align}
provided that there exists a measure $\sigma_r$, independent of $X$, such that \eqref{e:i3} holds.
Obviously, one further needs to prove that $\sigma_r$ is concentrated on $\{ g = r\}$. % and to clarify the dependence on $g$.
This approach was followed, among others, by \cite{Bogachev1998, DaPrato2014, Bogachev2016, Bonaccorsi2018, DaPrato2018}.
The main result in this paper is given in the the following theorem, whose proof is given in Section \ref{sez2}. %As we shall see, existence of such objects is the key point in the proof of our main theorem \ref{}).

\begin{theorem}\label{t1}
Let %$(\Omega, \mathcal{F}, \mathbb{P})$ be a probability space and 
$B$ the standard $n$-dimensional Brownian motion defined on %it
the Wiener probability space $(E, \cE, \mu)$.
Assume that the dimension $n$ %of the reference Brownian motion $B$ 
satisfies
\begin{equation}
n \ge 3.
\end{equation}
Let $g$ be the random variable defined above
\begin{align*}
g(x) = g(B)(x) = \frac12 \|x\|^2_H, \qquad x \in E,
\end{align*} %in \eqref{e:i1} 
and consider the function $F_X$ defined in \eqref{e:i2}.
Then, for any $r > 0$ there exists a unique Borel measure $\sigma_r$ on $E$ such that \eqref{e:i3} holds for any $X \in UC_b \cup \bD^{1,p}$ and  the support of $\sigma_r$ is concentrated on $\{g = r\}$.
\newline
{Moreover, for fixed $r > 0$, for any $X \in \bD^{1,p}$ and $h \in H$, the following integration by parts formula holds
\begin{equation*}
\int_{\{g < r\}}  \langle DX,h \rangle_H  \, \mu({\rm d}x) =
- \int_{\{g=r\}} X \langle D g, h \rangle_H \, \sigma_r({\rm d}x) + \int_{\{g < r\}}  X W(h)  \, \mu({\rm d}x),
\end{equation*}
with $W(h)$ the Gaussian random variable defined in \eqref{e:wie1}. 
}
\end{theorem}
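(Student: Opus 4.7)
My plan follows the Airault--Malliavin coarea strategy: compute $Dg$ explicitly, establish that the vector field $v:=Dg/\|Dg\|_H^2$ lies in $\mathrm{Dom}\,\delta$ (which is the only point where the dimension hypothesis $n\ge 3$ is actually used), and then mollify the indicator of $\{g<r\}$ to identify $F_X'(r)$ with a surface integral via the Malliavin duality $\int\langle DF,u\rangle_H\,{\rm d}\mu=\int F\,\delta(u)\,{\rm d}\mu$. Differentiating along the Cameron--Martin direction $\tilde h(t)=\int_0^t h(s)\,{\rm d}s$ and applying Fubini give
\[
Dg(x)(s)=\int_s^1 x(t)\,{\rm d}t,\qquad
\|Dg\|_H^2=\sum_{j=1}^n\int_0^1\Bigl(\int_s^1 B_j(t)\,{\rm d}t\Bigr)^2{\rm d}s,
\]
a sum of $n$ independent non-negative Gaussian quadratic functionals. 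A standard small-ball estimate for such sums yields $\bP(\|Dg\|_H^2<t)\lesssim t^{n/2}$, so that $\|Dg\|_H^{-2p}\in L^1(E,\mu)$ for every $p<n/2$. The hypothesis $n\ge 3$ is precisely what makes this quantity finite for some $p>1$, which is what one needs to make sense of $v,\,\delta(v)\in L^{p'}$ below.

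With those estimates in hand, let $\chi_\varepsilon\in C^\infty(\bR)$ be a smooth approximation of $\uno_{(0,\infty)}$ with derivative $\eta_\varepsilon=\chi_\varepsilon'$ a positive mollifier. From $D[\chi_\varepsilon(r-g)]=-\eta_\varepsilon(r-g)\,Dg$ and Malliavin duality one gets, for $X\in\bD^{1,p}$,
\[
\int_E X\,\eta_\varepsilon(r-g)\,{\rm d}\mu=-\int_E\langle D\chi_\varepsilon(r-g),Xv\rangle_H\,{\rm d}\mu=-\int_E\chi_\varepsilon(r-g)\,\delta(Xv)\,{\rm d}\mu.
\]
Combining this with the product rule $\delta(Xv)=X\delta(v)-\langle DX,v\rangle_H$ and dominated convergence (controlled by the integrability of the previous step), one passes to the limit $\varepsilon\to 0$, deduces that $F_X\in C^1$, and obtains an explicit volume representation of $F_X'(r)$. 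On the other hand the very same identities exhibit $F_X'(r)=\lim_\varepsilon\int_E X\,\eta_\varepsilon(r-g)\,{\rm d}\mu$ as the limit of a uniformly bounded family of positive linear functionals on $UC_b(E)$ whose supports shrink to $\{g=r\}$; by the Riesz--Markov theorem this limit is represented by a unique Borel measure $\sigma_r$ concentrated on $\{g=r\}$, proving \eqref{e:i3}. Density of cylinder/smooth functionals extends the representation from $UC_b$ to $\bD^{1,p}$.

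For the integration by parts formula I would apply the global Wiener IBP $\int_E\langle DY,h\rangle_H\,{\rm d}\mu=\int_E Y\,W(h)\,{\rm d}\mu$ to $Y_\varepsilon:=X\chi_\varepsilon(r-g)$ and expand $DY_\varepsilon=\chi_\varepsilon(r-g)DX-X\eta_\varepsilon(r-g)Dg$. Sending $\varepsilon\to 0$, the two volume contributions converge by dominated convergence to $\int_{\{g<r\}}\langle DX,h\rangle_H\,{\rm d}\mu$ and $\int_{\{g<r\}}X\,W(h)\,{\rm d}\mu$ respectively, while the cross term $\int_E X\eta_\varepsilon(r-g)\langle Dg,h\rangle_H\,{\rm d}\mu$ converges to the surface integral $\int_{\{g=r\}}X\langle Dg,h\rangle_H\,\sigma_r({\rm d}x)$ by the construction of $\sigma_r$ as the limit of $\eta_\varepsilon(r-g)\,{\rm d}\mu$; rearranging (with the sign dictated by the convention for $W(h)$) produces the stated IBP formula. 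The main analytic obstacle is the proof that $v\in\mathrm{Dom}\,\delta$ with $\delta(v)$ integrable enough to justify the various dominated-convergence arguments uniformly in $r$ on compact subintervals; this reduces to controlling $D\!\bigl(\|Dg\|_H^{-2}\bigr)$ near the small-set $\{\|Dg\|_H\approx 0\}$, which is exactly what the small-ball estimate (equivalently the assumption $n\ge 3$) makes possible, and it is the single place where the dimension restriction is unavoidable.
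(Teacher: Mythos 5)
Your overall architecture coincides with the paper's: a Malliavin nondegeneracy estimate for $g$, mollification of the indicator of $\{g<r\}$ combined with the duality between $D$ and $\delta$ to identify $F_X'(r)$, a weak limit of the measures $\varepsilon^{-1}\uno_{\{r<g\le r+\varepsilon\}}\,\mu$ to produce $\sigma_r$, and the same mollification applied to $Y_\varepsilon=X\theta_\varepsilon(g)$ for the integration by parts formula. The substantive difference, and the place where your argument has a genuine gap, is the choice of the covering vector field. You take $v=Dg/\|Dg\|_H^2$, so that $\langle Dg,v\rangle_H\equiv 1$, and you claim that the small-ball bound $\bP(\|Dg\|_H^2<t)\lesssim t^{n/2}$ (which is correct, and provable by the same Jensen/$\chi^2$ trick as in the paper's Lemma \ref{Lem1}) suffices to put $v$ in $\mathrm{Dom}\,\delta$ for $n\ge 3$. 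It does not, at least not by the route you indicate. Writing $\psi=\|Dg\|_H^{-2}$, the natural estimate is $\|D\psi\|_H\le 2\|Dg\|_H^{-4}\,\|Dg\|_H\,\|D^2g\|\lesssim \|Dg\|_H^{-3}$, and your tail bound gives $\bE\|Dg\|_H^{-3p}<\infty$ only for $p<n/3$; so $\psi\in\bD^{1,p}$ for some $p>1$, and hence the product rule $\delta(\psi\,Dg)=\psi\,\delta(Dg)-\langle Dg,D\psi\rangle_H$ in the form of Proposition \ref{N}, is only available for $n\ge 4$. As written, your argument does not reach the threshold $n=3$ stated in the theorem.

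The gap is fixable, but it requires an ingredient you do not mention: one must exploit the cancellation in the specific pairing $\langle Dg,D\psi\rangle_H=-2\|Dg\|_H^{-4}\langle Dg\otimes Dg,D^2g\rangle$, which is bounded by $2\|D^2g\|\,\|Dg\|_H^{-2}$ rather than by $\|Dg\|_H\,\|D\psi\|_H\lesssim\|Dg\|_H^{-2}\cdot\|Dg\|_H^{-1}$. Approximating $\psi$ by $(\|Dg\|_H^2+\epsilon)^{-1}$ and using the closedness of $\delta$ then recovers $\delta(v)\in L^q$ for $q<n/2$, hence $n\ge 3$. The paper sidesteps this entirely by a different normalization, $u(s)=D_sg/|D_sg|$ and $\gamma=\int_0^1|D_sg|\,{\rm d}s$, whose payoff is the deterministic bound $|D\gamma|\le 1$ of Lemma \ref{lem1.6}: then $\|D(1/\gamma)\|_H\le\gamma^{-2}$ costs only two powers of $\gamma^{-1}$, and the black-box product rule already gives $u/\gamma\in D_q(\delta)$ for $q<n/2$. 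You should either adopt that normalization or carry out the $\epsilon$-regularization argument explicitly; in addition, the step where you multiply by $X\in\bD^{1,p}$ needs the exponent bookkeeping of Lemma \ref{l46} (requiring $p>\tfrac{n}{n-2}$), which your appeal to "the product rule" glosses over.
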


It is necessary to emphasize that the main effort in the proof is required by proving 
the following proposition, which states that the random variable $g$ satisfies, in a suitable sense, the local Malliavin condition, see \cite{Bonaccorsi2018}.
Such condition was introduced by Nualart \cite[Definition 2.1.2]{Nualart2006} (in a slightly different formulation) in a related context, i.e., the analysis of the density for the law of a random variable. \textcolor{black}{In our construction the law of the random variable $g$ plays a crucial role since it provides an explicit characterization of the surface measure $\sigma_r$ (see \eqref{charact}).}

\begin{proposition}\label{hp:1}[Malliavin condition on $g$]
%Assume that there 
There exists a process $u \in L^p(E,\mu:H)$ %{\color{red}$u: \Omega \to H$} 
and a real valued random variable $\gamma \in \bD^{1,p}$, for any $p > 1$, %{\color{red}$\gamma: \Omega \to \bR$} 
such that the following identity holds
\begin{equation}
\label{e:i4}
\langle D g, u \rangle_H = \gamma
\end{equation}
and $\frac{u}{\gamma}$ belongs to $D_q(\delta)$ for every
\begin{equation}
\label{e:boundq}
1 < q < \tfrac{n}2.
\end{equation}
\end{proposition}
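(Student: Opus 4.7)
My plan is to choose $u := Dg$ and $\gamma := \|Dg\|_H^2$, so that the identity $\langle Dg, u\rangle_H = \gamma$ is automatic. Since $g = \frac12\|B\|_H^2$ differs from its mean $n/4$ by a pure second-chaos element, both $Dg$ and $\|Dg\|_H^2$ have all finite moments in every Malliavin Sobolev space; in particular $u \in L^p(E,\mu;H)$ and $\gamma \in \bD^{1,p}$ for every $p>1$.

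The crux is to prove $Dg/\|Dg\|_H^2 \in D_q(\delta)$ for every $1 < q < n/2$. I would regularize by setting $\gamma_\eps := \gamma + \eps$ and $v_\eps := Dg/\gamma_\eps$; since $1/\gamma_\eps$ is bounded and smooth, the product rule for $\delta$ gives
\begin{equation*}
\delta(v_\eps) \;=\; \frac{\delta(Dg)}{\gamma_\eps} \;+\; \frac{\langle D\gamma, Dg\rangle_H}{\gamma_\eps^{\,2}}.
\end{equation*}
For the first term, the Ornstein--Uhlenbeck identity $\delta D = -L$, together with $g - n/4$ being a pure second chaos element, gives $\delta(Dg) = 2g - n/2$. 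For the second term, I would introduce the bounded integral operator $(Af)(s) := \int_s^1 f(t)\,dt$, so that $Dg = AB$ and $\gamma = \langle B, A^*AB\rangle_H$; then $D\gamma = 2A^*AB$ and $\langle D\gamma, Dg\rangle_H = 2\|A^*AB\|_H^2$. The crucial step is then the spectral inequality $(A^*A)^2 \le \lambda_0\,A^*A$, where $\lambda_0 = 4/\pi^2$ is the top eigenvalue of the Brownian covariance operator $A^*A$ on $[0,1]$ (with eigenbasis $\phi_k(t) = \sqrt{2}\sin((k+\tfrac12)\pi t)$): this gives $\|A^*AB\|_H^2 \le \lambda_0 \gamma$, hence the second term is controlled by $2\lambda_0/\gamma_\eps$ rather than by the naive $\gamma_\eps^{-2}$ singularity.

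Combining, $|\delta(v_\eps)| \le (|2g - n/2| + 2\lambda_0)/\gamma$. The Karhunen--Lo\`eve expansion $\gamma = \sum_k \lambda_k^2 |\xi_k|^2 \ge \lambda_0^2 |\xi_0|^2$ with $\xi_0$ standard Gaussian in $\bR^n$ shows that $\gamma^{-q} \le \lambda_0^{-2q}|\xi_0|^{-2q}$, which is integrable iff $2q < n$; H\"older against the numerator then yields a uniform bound $\sup_\eps \|\delta(v_\eps)\|_{L^q} < \infty$ for every $1 < q < n/2$. Since $v_\eps \to Dg/\gamma$ both a.e.\ and in $L^{q'}(E,\mu;H)$ for $q' < n$, passing to the limit in the duality $E[F\delta(v_\eps)] = E[\langle DF, v_\eps\rangle_H]$ against smooth test variables $F$ identifies the $L^q$-limit of $\delta(v_\eps)$ as the required divergence $\delta(u/\gamma)$. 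The hard part is precisely the spectral estimate $\|A^*AB\|^2 \le \lambda_0\|AB\|^2$: without this extra factor of $\gamma$ the naive product-rule computation would retain a true $\gamma^{-2}$ singularity and only produce $q < n/3$. This gain is exactly what matches the Gaussian threshold $|\xi_0|^{-2q}\in L^1 \Leftrightarrow q < n/2$, and explains why $n \ge 3$ is the minimal assumption making the admissible interval $1 < q < n/2$ nonempty.
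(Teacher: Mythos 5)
Your proof is correct in substance but follows a genuinely different route from the paper's. The paper takes $u(s)=D_sg/|D_sg|$ and $\gamma=\int_0^1|D_sg|\,{\rm d}s$: there $u$ is bounded and adapted to the future, hence in $D_r(\delta)$ for every $r$ essentially for free, and the whole burden is shifted onto negative moments of $\gamma$, obtained from the elementary lower bound $\gamma\ge c\,\bigl|\int_0^1 D_sg\,{\rm d}s\bigr|$ and a chi-square tail estimate $\bP(\gamma<\eta)\le c\eta^n$, followed by the product rule of Proposition \ref{N} applied to $\tfrac1\gamma\in\bD^{1,p}$, $p<\tfrac n2$. You instead take the canonical Nualart choice $u=Dg$, $\gamma=\|Dg\|_H^2$, where the obstacle is the apparent $\gamma^{-2}$ singularity in $\delta(Dg/\gamma)$; your key observation --- that $\langle D\gamma,Dg\rangle_H=2\|A^*AB\|_H^2\le 2\lambda_0\gamma$ via the spectral inequality $(A^*A)^2\le\lambda_0A^*A$ --- removes one power of $\gamma$ and is exactly what makes the two constructions land on the same threshold $q<\tfrac n2$ (your Karhunen--Lo\`eve bound $\bE[\gamma^{-q}]<\infty$ for $q<\tfrac n2$ plays the role of the paper's $\bE[\gamma^{-2p}]<\infty$ for $p<\tfrac n2$ in Lemma \ref{lem1.6}). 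Your route requires the spectral decomposition of the Brownian covariance operator, but pays off with an explicit pointwise bound $|\delta(u/\gamma)|\le(|2g-n/2|+2\lambda_0)/\gamma$, which the paper's argument does not produce. One slip to correct: $D\gamma=2AA^*AB$, not $2A^*AB$ (the extra $A$ is the factor $\int_s^1\uno_{(0,t)}(\theta)\,{\rm d}t$ appearing in \eqref{D0}); this is harmless because the quantity you actually use, $\langle D\gamma,Dg\rangle_H=\langle 2AA^*AB,AB\rangle_H=2\|A^*AB\|_H^2$, is the one you wrote. The remaining steps --- the product rule for $1/\gamma_\varepsilon$, the identity $\delta(Dg)=2g-n/2$ from the second-chaos structure of $g$, the H\"older estimate giving $\sup_\varepsilon\|\delta(v_\varepsilon)\|_{L^q}<\infty$, and the passage to the limit by closedness of $\delta$ --- are all sound.
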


The proof of this Proposition is given in Section \ref{sez3}. The proof of the main theorem, which is given in Section \ref{sez2},
follows quite naturally by the ideas provided in \cite{Bonaccorsi2018, DaPrato2018}.

{\color{black}Our construction, in particular, leads to the following identity concerning the surface measure $\sigma_r$:
\begin{equation}
\label{charact}
\bE[X \mid g=r] \, f_1(r) = \int_{\{g=r\}} X(\xi) \, \sigma_r({\rm d}\xi),
\end{equation}
where $f_1$ is the probability density function of the random variable $g = g(B)$. 
Corollary \ref{c2.3} below assures that $f_1$ is a bounded and continuous function and the identity above holds for every $r > 0$.}

In the last part of the paper we extend previous results to the analysis of the random variables $g(u)$, where we assume that $u$ is  the
solution of a stochastic differential equation of gradient form
\begin{equation}
\label{e:sde-intro}
u(t) = - \int_0^t \nabla V(u(s)) \, {\rm d}s + B(t).
\end{equation}
{This is the first step in considering processes whose image law is non Gaussian.}
In particular, in Section \ref{sez5} we prove the following result.

\begin{theorem}
Let $u$ be the solution of equation \eqref{e:sde-intro}, where $V \in C^3_b(\bR^n;\bR^n)$, $n \ge 3$, and $g(u)$ be the random variable defined in \eqref{eq:gintro}.
\\
Then $g(u)$ defined on $(E, \cE, \mu)$, has a continuous and bounded density $\varphi_1$ with respect to the Lebesgue measure on $\bR_+$.
Moreover, there exists a surface measure $\theta_r$  concentrated on $\{ g(u) = r\}$ 
that is the restriction of $\mu$ to the level set $\{ g(u) = r\}$ .
\end{theorem}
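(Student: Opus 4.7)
The plan is to reduce the statement to the Brownian case already settled in Theorem~\ref{t1} by means of a Girsanov-type change of measure. Since $V \in C^3_b(\bR^n;\bR^n)$ the drift $-\nabla V$ is bounded, so Novikov's condition is trivially satisfied and I would introduce
\[
L := \exp\!\Big(\int_0^1 \nabla V(u(s)) \cdot \mathrm{d}B(s) - \tfrac{1}{2}\int_0^1 |\nabla V(u(s))|^2\,\mathrm{d}s\Big),
\]
so that the equivalent measure $\tilde\mu = L\,\mu$ on $(E,\cE)$ turns $u$ into a standard $n$-dimensional Brownian motion. Using $\mathrm{d}B = \mathrm{d}u + \nabla V(u)\,\mathrm{d}t$ together with It\^o's formula applied to $V(u(t))$, the inverse density $L^{-1}$ can be rewritten as a deterministic, uniformly bounded path functional
\[
\Psi(y) := \exp\!\Big({-}(V(y(1))-V(0)) + \tfrac{1}{2}\!\int_0^1 \!\big(\Delta V(y(s)) - |\nabla V(y(s))|^2\big)\,\mathrm{d}s\Big),
\]
bounded thanks to $V\in C^2_b$.

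Next let $\Phi:E\to E$, $B\mapsto u$, be the pathwise solution map of \eqref{e:sde-intro}, and $G(y)(t) = y(t) + \int_0^{t}\nabla V(y(s))\,\mathrm{d}s$ its inverse, which is a Lipschitz homeomorphism of $E$ because $\nabla V$ is bounded and Lipschitz. Since under $\tilde\mu$ the law of $u$ coincides with the law of $B$ under $\mu$, for any $X\in UC_b\cup \bD^{1,p}$ the change of measure gives
\[
\int_{\{g(u)<r\}}\! X(x)\,\mu(\mathrm{d}x) = \int_{\{g<r\}}\! \Xi(y)\,\mu(\mathrm{d}y), \qquad \Xi := (X\circ G)\,\Psi .
\]
A short argument using the Lipschitz continuity of $G$ and the smoothness of $V$ shows that $\Xi$ remains in $UC_b\cup\bD^{1,p}$ whenever $X$ does. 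Applying Theorem~\ref{t1} to $\Xi$ and differentiating in $r$ yields
\[
\frac{\mathrm{d}}{\mathrm{d}r}\!\int_{\{g(u)<r\}}\! X\,\mu(\mathrm{d}x) = \int_{\{g=r\}}\! \Xi(y)\,\sigma_r(\mathrm{d}y) =: \int_{\{g(u)=r\}}\! X\,\theta_r(\mathrm{d}x),
\]
where $\theta_r$ is \emph{defined} as the pushforward of the finite measure $\Psi\,\sigma_r$ under $G$. Since $g(u)\circ G = g$, the map $G$ carries $\{g=r\}$ bijectively onto $\{g(u)=r\}$, so $\theta_r$ is indeed supported on $\{g(u)=r\}$.

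Choosing $X\equiv 1$ then yields the density of $g(u)$ as
\[
\varphi_1(r) = \int_{\{g=r\}}\Psi\,\sigma_r(\mathrm{d}y) = \bE\!\big[\Psi(B)\,\big|\,g(B)=r\big]\,f_1(r),
\]
by the characterization~\eqref{charact}. Boundedness is immediate from $\|\Psi\|_\infty<\infty$ together with the boundedness of $f_1$ (Corollary~\ref{c2.3}), while continuity of $\varphi_1$ is inherited from the continuity of $f_1$ together with the continuity of the conditional expectation $r\mapsto \bE[\Psi(B)\mid g(B)=r]$, itself a consequence of the Malliavin regularity of $g$ established in Proposition~\ref{hp:1}. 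The main technical obstacle I anticipate is verifying that the transformation $X\mapsto \Xi=(X\circ G)\,\Psi$ preserves the class $UC_b\cup \bD^{1,p}$: one must show by a chain-rule argument on cylindrical functionals that $G$ pulls $\bD^{1,p}$ into itself, and that $\Psi\in \bigcap_{p>1}\bD^{1,p}$ with a Malliavin derivative that can be computed explicitly in terms of $\nabla V$ and $\nabla^2 V$, which is precisely where the full $C^3_b$-regularity of $V$ is needed.
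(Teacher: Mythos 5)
Your proposal follows essentially the same route as the paper: Girsanov's theorem to turn $u$ into a Brownian motion under $\rho_1\,\mu$, It\^o's formula to rewrite the inverse density as a bounded, uniformly continuous path functional (the paper's Lemma \ref{l5.1} and Proposition \ref{p5.1}), and then a reduction to Theorem \ref{t1} applied to the transformed integrand, with the surface measure $\theta_r$ obtained from $\sigma_r$. Your explicit bookkeeping via the inverse solution map $G$ and the definition of $\theta_r$ as the pushforward of $\Psi\,\sigma_r$ is a slightly more careful rendering of the paper's identity $\Phi_X(r)=F_{X\rho_1(B)^{-1}}(r)$, but the argument is the same.
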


We notice that the probability density function $\varphi_1(r)$ of the random variable $g(u)$ with respect to the Lebesgue measure can be computed in terms of $f_1$ as follows:
\begin{align*}
\varphi_1(r) = \bE[ \rho_1(B)^{-1} \mid g(B) = r] f_1(r),
\end{align*}
where $\rho_1(B)^{-1}$ is a bounded function which is defined in terms of the coefficient $V$ in \eqref{e:sde-intro}.
\\
Moreover, it follows that $\varphi_1(r) = \theta_r(\{g(u)=r\})$.
The proof is based on a Girsanov transformation of the reference Gaussian measure and it exploits the results obtained in the case $u=B$.

\section{An introduction to Malliavin calculus}
\label{s.malliavin}

{
In literature different ways of introducing the Malliavin derivative are present. We work here in the general framework given in \cite{Nualart2006}. This approach requires to fix a probability space $(\Omega, \mathcal{F}, \mathbb{P})$ and an isonormal Gaussian process which provides the Gaussian framework.
Here, as reference probability space, we consider the Wiener space $(E, \mathcal{E}, \mu)$. The isonormal Gaussian process is given by the family of Wiener integrals.
}

We denote, as before, by $B(t)(x) = x(t)$, $ t \in [0,1]$, $x \in E$, the standard $n$-dimensional Brownian motion on the probability space $(E, \cE, \mu)$.
Given this process, we may introduce the Wiener integral
\begin{align}\label{e:wie1}
W(h) = \int_0^1 \langle h(s), {\rm d}B(s) \rangle_{\bR^n}, \qquad h \in H = L^2(0,1;\bR^n).
\end{align}
For any $h \in H$, $W(h)$ is a centered Gaussian random variable with variance $\|h\|^2_H$.
We shall denote by $\cH_1$ the following subspace of $L^2(E,\mu)$, called the {\em first Wiener chaos},
defined by
\begin{align*}
\cH_1 = \{F \in L^2(E,\mu)\,:\, \exists\, h \in H,\ F = W(h)\}.
\end{align*}
{
The map $W$ defines a linear isometry between $H$ and $\mathcal{H}_1$. In particular, we have $\{B(t),\, t\in \left[0,1\right]\} = \{W(\pmb{1}_{\left[0,t\right]}),\, t\in \left[0,1\right]\}$.
}
\begin{remark}
\em In the sequel, we shall use  the probabilistic notation of expectation for the integral over $E$
\begin{align*}
\bE[F] = \int_E F(x) \, \mu({\rm d}x),
\end{align*}
for a measurable function (random variable) $F: (E,\cE) \to \bR$. In particular,
\begin{align*}
\bE[W(h)] = \int_E W(h)(x) \, \mu({\rm d}x) = 0, \qquad \bE[|W(h)|^2] = \int_E |W(h)(x)|^2 \, \mu({\rm d}x) = \|h\|^2_H.
\end{align*}
\end{remark}

Starting from the space $\cH_1$ we construct the class of smooth random variables
\begin{align*}
\cS = \{F : (E, \cE) \to \bR\,:\, \exists\, f\in C^\infty_P(\bR^d),\ h_1, \dots, h_d \in H,\ F = f(W(h_1), \dots, W(h_d)\},
\end{align*}
where $C^\infty_P(\bR^d)$ is the space of smooth functions on $\bR^d$ with polynomial growth at infinity.
\\
We see that $\cS \subset L^p(E,\mu)$ for any $p \ge 1$. On the class $\cS$ of smooth random variables we consider a functional (actually, a family of functionals indexed by the order of integration $p$)
\begin{align*}
D: \cS \subset L^p(E,\mu) \to L^p(E,\mu;H)
\end{align*}
by setting
\begin{align*}
D F = \sum_{k=1}^d \frac{\partial f}{\partial x_k}(W(h_1), \dots, W(h_d)) \, h_k.
\end{align*}

\begin{lemma}
Let $F \in \cS$ and $h \in H$. Then it holds
\begin{align*}
\bE[\langle DF, h \rangle_H] = \bE[F W(h)].
\end{align*}
\end{lemma}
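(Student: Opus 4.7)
The plan is to reduce the identity to the classical integration-by-parts formula for the finite-dimensional Gaussian measure. Since $F \in \cS$, only finitely many Wiener integrals enter the definition of $F$, so the statement is essentially finite-dimensional.

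First I would absorb $h$ into the indexing family. Set $V = \mathrm{span}(h_1,\dots,h_d,h) \subseteq H$ and apply Gram--Schmidt to obtain an orthonormal basis $e_1,\dots,e_m$ of $V$. Write $h_k = \sum_{j=1}^m c_{kj}\, e_j$ and $h = \sum_{j=1}^m b_j\, e_j$. Linearity of the Wiener integral gives $W(h_k) = \sum_j c_{kj}\, W(e_j)$ and $W(h) = \sum_j b_j\, W(e_j)$. Defining
\begin{equation*}
\tilde f(y_1,\dots,y_m) = f\Bigl(\textstyle\sum_j c_{1j}\, y_j,\dots, \sum_j c_{dj}\, y_j\Bigr),
\end{equation*}
we have $\tilde f \in C^\infty_P(\bR^m)$ with $\partial_j \tilde f = \sum_k c_{kj}\, \partial_k f$, so that $F = \tilde f(W(e_1),\dots,W(e_m))$ and, by the definition of $D$ together with the chain rule,
\begin{equation*}
DF = \sum_{j=1}^m \partial_j \tilde f(W(e_1),\dots,W(e_m))\, e_j, \qquad \langle DF, h\rangle_H = \sum_{j=1}^m b_j\, \partial_j \tilde f(W(e_1),\dots,W(e_m)).
\end{equation*}

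Next, I would use that $(W(e_1),\dots,W(e_m))$ is a standard Gaussian vector in $\bR^m$ under $\mu$: orthonormality of $\{e_j\}$ together with the isometry property of $W$ yields $\bE[W(e_i) W(e_j)] = \langle e_i, e_j\rangle_H = \delta_{ij}$. Thus, writing $\gamma_m$ for the standard Gaussian measure on $\bR^m$,
\begin{equation*}
\bE[\langle DF, h\rangle_H] = \sum_{j=1}^m b_j \int_{\bR^m} \partial_j \tilde f(x)\, \gamma_m({\rm d}x), \qquad \bE[F\, W(h)] = \sum_{j=1}^m b_j \int_{\bR^m} \tilde f(x)\, x_j\, \gamma_m({\rm d}x).
\end{equation*}

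Finally, for each $j$ I would apply the one-dimensional integration by parts in the variable $x_j$: since $\partial_j e^{-|x|^2/2} = -x_j\, e^{-|x|^2/2}$, Fubini and integration by parts give $\int_{\bR^m} \partial_j \tilde f(x)\, \gamma_m({\rm d}x) = \int_{\bR^m} \tilde f(x)\, x_j\, \gamma_m({\rm d}x)$, with vanishing boundary terms because $\tilde f$ has polynomial growth while the Gaussian density decays exponentially. Summing over $j$ matches the two displayed expressions and yields the claim. The only technical point is the justification of the interchange of integration and differentiation and the decay of boundary terms, both of which are automatic from the polynomial growth built into the definition of $\cS$; there is no genuine obstacle.
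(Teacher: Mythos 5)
Your proof is correct. The paper itself gives no argument but simply refers to Nualart's book, and what you have written is precisely the standard proof found there: reduce to an orthonormal family via Gram--Schmidt so that $(W(e_1),\dots,W(e_m))$ is a standard Gaussian vector, then apply finite-dimensional Gaussian integration by parts, with the polynomial growth of $f$ handling the boundary terms.
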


For the proof we refer to \cite{Naualrt2006}.
As a consequence, it is possible to prove that the operator $D$ is closable from $L^p(E,\mu)$ to $L^p(E,\mu;H)$.

\begin{definition}
We define the norm
\begin{align*}
\|F\|_{1,p}^p = \bE[|F|^p] + \bE[\|D F\|^p_H], \qquad F \in \cS.
\end{align*}
Then the {\em domain} of the Malliavin derivative $D$, denoted by $\bD^{1,p}$, is the closure of the class $\cS$ in $L^p(E,\mu)$ with respect to the norm $\|\cdot\|_{1,p}$. We shall denote again by $D$ this closure.
\end{definition}

\begin{center}
\begin{picture}(200,100)
\put(0, 90){$H$}
\put(10, 93){\vector(1,0){35}}
\put(20, 98){\small $W$}
\put(50, 90){$\cH_1$}
\put(70, 90){$\subset$}
\put(88, 90){$\cS$}
\put(95, 93){\color{black!50!white}\vector(1,-1){80}}
\put(175, 10){$L^p(E,\mu;H)$}
\put(88, 70){\rotatebox[origin=c]{-90}{$\subset$}}
\put(88, 50){$\overline{\cS}^{1,p}$}
\put(88, 30){\rotatebox[origin=c]{-90}{$=$}}
\put(88, 10){$\bD^{1,p}$}
\put(30, 10){$L^{p}(E,\mu) \supset$}
\put(118, 58){\color{black!50!white}\rotatebox[origin=c]{-45}{\small $D W(h) = h$}}
\put(110, 13){\vector(1,0){65}}
\put(130, 18){\small $D$}
\end{picture}
\end{center}

\bigskip

{\color{black}

Let us now introduce the divergence operator.
\newline
Fix $1<q<\infty$ with $\frac 1p+\frac 1q=1$. By $D_q(\delta)$ we denote the domain of the diverge operator $\delta$. It consists of all $v \in L^q(E, \mu,;H)$ for which there exits a $G_v \in L^q(E, \mu)$ such that 
\begin{equation*}
\mathbb{E} \left[ \langle v, DF\rangle_H \right]= \mathbb{E} \left[G_vF \right], \qquad F \in \mathbb{D}^{1,p}.
\end{equation*}
The function $G_v$, if it exists, is uniquely determined. We set
\begin{equation*}
\delta(v):=G_v, \qquad v \in D_q(\delta).
\end{equation*}
The divergence operator is easily seen to be closed and densely defined. 

It is known that in the case $H=L^2(0,1;\mathbb{R}^n)$, for a given $F \in \mathbb{D}^{1,p}$, its Malliavin derivative $DF \in L^p(E, \mu;H)$ can be interpreted as a stochastic process indexed by $t \in \left[0,1\right]$. In this case we can interpret the divergence operator as a stochastic integral, the Skorohod integral and the following notation becomes significant: 
\begin{equation*}
\delta(u)=\int_0^1 \langle u(s), \delta B(s)\rangle_{\mathbb{R}^n}.
\end{equation*}
If the process $u$ is adapted and It\^o integrable then the Skorohod integral coincides with the It\^o integral. In the special case $u = h \in H$, we have $\delta(u)=W(u)$, with $W(u)$ given by \eqref{e:wie1}.

We conclude this Section with the following proposition, that we will exploit to prove our main result Theorem \ref{t1}. It is an extension of \cite[Proposition 1.3.3]{Nualart2006} and a proof can be found in \cite[Proposition 6.9]{Maas2010}.

\begin{proposition}\label{N}
Let $1 < r,q < \infty$ be such that $\frac1q = \frac1p + \frac1r$. Assume that $F \in \bD^{1,p}$ and $u \in D_{r}(\delta)$.
Then $Fu \in D_{q}(\delta)$ and
\begin{align*}
\delta(Fu) = F \delta(u) - \langle u, DF \rangle_H.
\end{align*}
\end{proposition}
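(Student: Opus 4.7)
The plan is to verify the identity by a direct duality argument, reducing everything to a Malliavin product rule $D(FG)=F\,DG + G\,DF$ which I will justify by an approximation argument. First I would check the integrability of the candidate objects. Since $F\in L^p(E,\mu)$ and $\delta(u)\in L^r(E,\mu)$, H\"older's inequality together with the hypothesis $1/q=1/p+1/r$ gives $F\delta(u)\in L^q(E,\mu)$; similarly, using Cauchy--Schwarz in $H$ and H\"older in $E$, we get $\langle u,DF\rangle_H\in L^q(E,\mu)$ and $Fu\in L^q(E,\mu;H)$. So both sides of the purported equality live in the right spaces.

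Next, the key auxiliary step is the following: if $F\in\bD^{1,p}$ and $G\in\bD^{1,q'}$ (with $1/q+1/q'=1$), then $FG\in\bD^{1,r'}$ with $1/r'=1-1/r$, and the Leibniz rule $D(FG)=F\,DG+G\,DF$ holds. Notice that the exponents are compatible: from $1/q=1/p+1/r$ one gets $1/p+1/q'=1/r'$, so H\"older produces $FG\in L^{r'}$, $F\,DG\in L^{r'}(E,\mu;H)$ and $G\,DF\in L^{r'}(E,\mu;H)$. For smooth $F,G\in\cS$ the identity is immediate from the chain-rule definition of $D$ on smooth cylindrical functionals. For general $F,G$ in the respective Sobolev spaces, take approximating sequences $F_n\to F$ in $\bD^{1,p}$ and $G_n\to G$ in $\bD^{1,q'}$, both in $\cS$. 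H\"older's inequality yields $F_nG_n\to FG$ in $L^{r'}(E,\mu)$ and $F_n\,DG_n+G_n\,DF_n\to F\,DG+G\,DF$ in $L^{r'}(E,\mu;H)$, and the closability of $D$ from $L^{r'}$ to $L^{r'}(E,\mu;H)$ yields the claim.

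With this in hand, the proposition follows from the duality characterization of $D_q(\delta)$. Fix an arbitrary test $G\in\bD^{1,q'}$. Using symmetry of the $H$-inner product and then the product rule of Step 2 in the form $F\,DG=D(FG)-G\,DF$, we compute
\begin{align*}
\bE\bigl[\langle Fu,DG\rangle_H\bigr]
&=\bE\bigl[\langle u, F\,DG\rangle_H\bigr]
=\bE\bigl[\langle u, D(FG)\rangle_H\bigr]-\bE\bigl[G\,\langle u,DF\rangle_H\bigr].
\end{align*}
Since $FG\in\bD^{1,r'}$ and $u\in D_r(\delta)$, the defining duality for $\delta$ gives
$\bE[\langle u,D(FG)\rangle_H]=\bE[FG\,\delta(u)]$, and hence
\begin{equation*}
\bE\bigl[\langle Fu,DG\rangle_H\bigr]
=\bE\bigl[G\bigl(F\,\delta(u)-\langle u,DF\rangle_H\bigr)\bigr].
\end{equation*}
As $G\in\bD^{1,q'}$ was arbitrary, by the definition of $D_q(\delta)$ we conclude $Fu\in D_q(\delta)$ and $\delta(Fu)=F\,\delta(u)-\langle u,DF\rangle_H$.

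The main obstacle is the Leibniz rule for the Malliavin derivative across mixed Sobolev exponents in Step 2; the integrability bookkeeping and the final duality computation are then essentially automatic given the compatibility $1/q=1/p+1/r$. The classical version of this product rule (for $p=q'=2$) is standard; the only subtlety here is that the approximating smooth sequences have to be combined in a H\"older-stable way, which is precisely what the relation $1/p+1/q'=1/r'$ ensures.
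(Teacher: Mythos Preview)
Your argument is correct; the paper itself does not supply a proof of this proposition but simply cites \cite[Proposition 6.9]{Maas2010} (as an extension of \cite[Proposition 1.3.3]{Nualart2006}). Your duality computation, resting on the Leibniz rule $D(FG)=F\,DG+G\,DF$ for $F\in\bD^{1,p}$, $G\in\bD^{1,q'}$ with target exponent $r'$ determined by $1/r'=1/p+1/q'$, is exactly the standard route taken in those references, so your approach coincides with the one the paper defers to.
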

}

\bigskip

\section{Verification of the Malliavin condition}\label{sez3}

In this section we prove that the random variable
\begin{equation}
\label{g}
g(x) = \frac12 \|x\|^2_H = \frac12 \int_0^1 |x(t) |^2\, {\rm d}t, \qquad x \in E,
\end{equation}
satisfies the Malliavin condition stated in Proposition \ref{hp:1}.
From a probabilistic point of view, this random variable is strictly related to the Bessel process of order $\nu = \frac{n}{2}-1$: $X(t) = |B(t)|^2$. Some results on $g$ are given, for instance, in \cite{Borodin2002}.

In the following proposition we state the construction that we aim to prove in order to achieve the verification of the Malliavin condition.

\begin{proposition}
\label{hpg}
Assume that $B$ is the $n$-dimensional standard Brownian motion with $n \ge 3$.
The random variable $g$, defined in \eqref{g}, satisfies the Malliavin condition in Proposition \ref{hp:1} with
\begin{equation}\label{e:prop32}
u(s)=\frac{D_sg}{|D_sg|}, \qquad \gamma= \int_0^1 |D_sg | \,{\rm d}s.
\end{equation}
\end{proposition}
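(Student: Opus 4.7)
All three ingredients of the Malliavin condition rest on the explicit formula
\[
D_s g = M(s) := \int_s^1 B(t)\,dt \in \bR^n, \qquad s\in[0,1],
\]
obtained by the chain rule applied to $g = \tfrac12 \sum_{j=1}^n \int_0^1 B^j(t)^2\,dt$. For each $s<1$, $M(s)$ is a non-degenerate centred Gaussian vector in $\bR^n$, so $M(s)\neq 0$ almost surely and $u(s) = M(s)/|M(s)|$ is well-defined with $|u(s)|\leq 1$; this places $u$ in every $L^p(E,\mu;H)$. Moreover the identity $\langle Dg,u\rangle_H = \int_0^1 |M(s)|\,ds = \gamma$ is then tautological.

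For $\gamma \in \bD^{1,p}$, I would approximate $|x|$ by $|x|_\eps := \sqrt{|x|^2+\eps^2}$ to justify the chain rule and then pass to the limit. Setting $\widehat M(s) := M(s)/|M(s)|$, this produces
\[
D_r\gamma \;=\; \int_0^1 (1-s\vee r)\,\widehat M(s)\,ds,
\]
from which $|D_r\gamma|\leq \int_0^1 (1-s\vee r)\,ds = (1-r^2)/2$ holds deterministically, so $\|D\gamma\|_H \leq \sqrt{2/15}$ almost surely. Since $\gamma \leq \|M\|_H$ and $\bE[\|M\|_H^p] < \infty$ for every $p$, one obtains $\gamma \in \bD^{1,p}$ for every $p>1$.

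The main difficulty is the Skorohod-domain statement, which reduces to controlling negative moments of $\gamma$. The decisive observation is that by Fubini
\[
\int_0^1 M(s)\,ds \;=\; \int_0^1\!\int_0^t B(t)\,ds\,dt \;=\; \int_0^1 t\,B(t)\,dt \;=:\; Y,
\]
a centred Gaussian vector in $\bR^n$ with i.i.d.\ components of variance $2/15$. The triangle inequality in $\bR^n$ gives $|Y|\leq \int_0^1 |M(s)|\,ds = \gamma$, and since $\bE[|Y|^{-p}]<\infty$ if and only if $p<n$,
\[
\bE[\gamma^{-p}] \;\leq\; \bE[|Y|^{-p}] \;<\; \infty \qquad \text{for every } 1<p<n.
\]
This is precisely where $n\geq 3$ enters: it is what allows $p>2$, and hence exponents $q=p/2>1$, in the estimates below.

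Finally, to place $u/\gamma$ in $\bD^{1,q}(H)\subset D_q(\delta)$, I would differentiate directly:
\[
D(u/\gamma) \;=\; \frac{Du}{\gamma} \;-\; \frac{u\otimes D\gamma}{\gamma^2}.
\]
A componentwise computation gives $D_r^k u^j(s) = \frac{1-s\vee r}{|M(s)|}\bigl(\delta_{jk} - \widehat M^j(s)\widehat M^k(s)\bigr)$, and summing squares in $j,k$ produces
\[
\|Du\|_{H\otimes H}^2 \;=\; (n-1)\int_0^1 \frac{\sigma^2(s)}{|M(s)|^2}\,ds, \qquad \sigma^2(s) := \tfrac{(1-s)^2(1+2s)}{3}.
\]
Since $|M(s)|^2/\sigma^2(s)$ is chi-squared with $n$ degrees of freedom, Minkowski's inequality yields $\bE[\|Du\|_{H\otimes H}^r]<\infty$ for every $r<n$. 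A Cauchy--Schwarz bound on $\|Du\|_{H\otimes H}/\gamma$, combined with the deterministic control on $\|D\gamma\|_H$ and the moment bound $\bE[\gamma^{-2q}]<\infty$ for $2q<n$, then yields $\bE[\|D(u/\gamma)\|_{H\otimes H}^q]<\infty$ for every $1<q<n/2$. Together with $\|u/\gamma\|_H \leq 1/\gamma \in L^q$, the standard embedding $\bD^{1,q}(H)\subset D_q(\delta)$ concludes the verification.
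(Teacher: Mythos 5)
Your proposal is correct, and for most of the statement it runs parallel to the paper: the formula $D_sg=\int_s^1B(t)\,{\rm d}t$, the tautological identity $\langle Dg,u\rangle_H=\gamma$, the bound $\|D\gamma\|_H\le c$ a.s., and the negative moments of $\gamma$ all appear in the paper's Lemmas 3.4--3.7. For the negative moments the mechanism is identical — the key observation that $\gamma\ge\bigl|\int_0^1 D_sg\,{\rm d}s\bigr|=\bigl|\int_0^1 tB(t)\,{\rm d}t\bigr|$ reduces everything to a nondegenerate Gaussian vector in $\bR^n$ — though you package it as a direct domination $\bE[\gamma^{-p}]\le\bE[|Y|^{-p}]$ while the paper first proves the tail bound $\bP(\gamma<\eta)\le c\eta^n$ via a $\chi^2_n$ variable and then integrates; your version is slightly cleaner. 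The genuine divergence is in the last step, placing $u/\gamma$ in $D_q(\delta)$ for $1<q<n/2$. The paper factors the scalar out of the divergence via $\delta\bigl(\tfrac{1}{\gamma}u\bigr)=\tfrac{1}{\gamma}\delta(u)-\langle u,D\tfrac{1}{\gamma}\rangle_H$ (its Proposition 2.5), which requires $\tfrac1\gamma\in\bD^{1,p}$ for $p<\tfrac n2$ and the claim that $u\in D_r(\delta)$ for every $r$ because $u$ is bounded and adapted to the future (backward It\^o integrable) — a step the paper only sketches. You instead compute $Du$ explicitly, obtain $\|Du\|_{H\otimes H}^2=(n-1)\int_0^1\sigma^2(s)|M(s)|^{-2}\,{\rm d}s$ with $|M(s)|^2/\sigma^2(s)\sim\chi^2_n$, and conclude $u/\gamma\in\bD^{1,q}(H)\subset D_q(\delta)$ via Meyer's inequalities. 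Your route is more computational but more self-contained: it avoids the backward-adaptedness claim entirely, at the price of invoking the continuity of $\delta$ on $\bD^{1,q}(H)$ and of an explicit second-derivative computation. One point to tighten in a final write-up: membership of $u/\gamma$ in $\bD^{1,q}(H)$ should be justified by an approximation (e.g.\ replacing $|x|$ by $\sqrt{|x|^2+\eps^2}$ in both $u$ and $\gamma$, checking convergence of the approximants and of their derivatives in $L^q$, and using closedness of $D$), not only by finiteness of the moments of the formal derivative; this is, however, no less rigorous than the paper's own treatment.
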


\smallskip

\begin{remark}
\em As we shall see in the proof of Lemma \ref{lem1.5}, the condition on $n$ stated in Theorem \ref{t1} comes from the estimate of $\bE[\gamma^{-1/p}]$. 
The condition $n \ge 3$ is therefore {\em sufficient} for our construction to hold. It remains open the problem of whether it is also {\em necessary}. 
Notice that the existence of the probability density function $f_1$ for the random variable $g$ holds (via different techniques) for every $n \ge 1$.
\end{remark}
The proof of Proposition \ref{hpg} is based on a chain of four lemmas that we will prove in the next Subsection.

\subsection{Preliminary lemmas}

\begin{lemma}
\label{derg}
Let $\{B(t),\, t \in \left[0,1\right]\}$ be the standard $n$-dimensional Brownian motion. The function $g=g(B)$ as defined in \eqref{g} belongs to the space $\mathbb{D}^{1,p}$ for all $p \ge 1$ and its derivative (in the direction $i$) is given by
\begin{equation}
D_s^ig=\int_s^1B^i(t)\, {\rm d}t.
\end{equation}
\end{lemma}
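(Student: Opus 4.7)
The plan is to exhibit $g$ as an $L^p$-limit of elementary cylindrical polynomials in first-chaos elements, compute the Malliavin derivative of each approximant directly from the definition on $\cS$, and then invoke closability of $D$ to transfer the identity to the limit. The underlying idea is that $g$ is a smooth quadratic functional of the Brownian path, and all the analytical work reduces to a uniform Riemann-sum approximation of the integral $\tfrac12\int_0^1|x(t)|^2\,{\rm d}t$.

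Concretely, I would introduce the approximants
\[
g_N(x) := \frac{1}{2N}\sum_{k=1}^N |x(k/N)|^2 = \frac{1}{2N}\sum_{k=1}^N \sum_{i=1}^n \bigl(W(\mathbf{1}_{[0,k/N]}\,e_i)\bigr)^{2}(x),
\]
where $(e_i)_{i=1}^n$ is the canonical basis of $\bR^n$. Each $g_N$ is a polynomial in finitely many first-chaos Gaussian random variables, hence belongs to $\cS$, and the definition of $D$ on $\cS$ yields directly
\[
D_s^i g_N \;=\; \frac{1}{N}\sum_{k=1}^N B^i(k/N)\,\mathbf{1}_{[0,k/N]}(s).
\]

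Next I would verify two convergences. First, $g_N \to g$ in $L^p(E,\mu)$ for every $p \ge 1$: by continuity of $t \mapsto |x(t)|^2$ one has $g_N(x) \to g(x)$ pointwise on $E$, with the uniform domination $|g_N(x)| \le \tfrac 12\|x\|_\infty^2$, and the finiteness of all moments of $\|B\|_\infty$ under $\mu$ makes dominated convergence applicable. Second, setting $V_s^i(x) := \int_s^1 B^i(t)\,{\rm d}t$, for every fixed $x \in E$ the quantity $D_s^i g_N(x)$ is precisely a right-endpoint Riemann sum converging to $V_s^i(x)$ for every $s \in [0,1]$, with the uniform bound $|D_s^i g_N(x)| \le \|x\|_\infty$; dominated convergence first in $H$ (integration in $s$) and then in $(E,\mu)$ gives $Dg_N \to V$ in $L^p(E,\mu;H)$. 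Finally, the closability of $D \colon \cS \subset L^p(E,\mu) \to L^p(E,\mu;H)$, recorded in the excerpt just before the definition of $\bD^{1,p}$, forces $g \in \bD^{1,p}$ with the asserted expression for its derivative.

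I do not expect any serious obstacle: the argument is essentially the chain rule for smooth cylindrical functionals, and the only analytic content is the integrability of all polynomial moments of $\|B\|_\infty$ under the Wiener measure, which is classical and independent of the dimension $n$. This is what allows the conclusion to hold for every $p \ge 1$ simultaneously.
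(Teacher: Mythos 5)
Your proposal is correct. The paper's own proof is a single displayed formula: it differentiates under the integral sign, writing $D_s^i g = \int_0^1 B^i(t)\,D_s(B^i(t))\,{\rm d}t = \int_s^1 B^i(t)\,{\rm d}t$ using $D_s(B^i(t)) = \uno_{(0,t)}(s)$, and leaves both the interchange of $D_s$ with the time integral and the membership $g \in \bD^{1,p}$ implicit. Your argument is the rigorous elaboration of exactly this computation: the Riemann-sum approximants $g_N$ are genuine elements of $\cS$, the formula $D_s^i g_N = \tfrac1N \sum_k B^i(k/N)\,\mathbf{1}_{[0,k/N]}(s)$ is the correct derivative of a cylindrical polynomial, and the two dominated-convergence steps (using that all moments of $\|B\|_\infty$ are finite under $\mu$, \eg\ by Fernique's theorem) show that $(g_N)$ is Cauchy in the norm $\|\cdot\|_{1,p}$, so that closability of $D$ delivers $g \in \bD^{1,p}$ together with the asserted formula. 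What your route buys is precisely the justification the paper skips: it makes explicit why the conclusion holds for every $p \ge 1$ (a purely Gaussian integrability fact, independent of $n$), whereas the paper's formal chain-rule computation, while yielding the same kernel $\int_s^1 B^i(t)\,{\rm d}t$, does not by itself establish that $g$ lies in the domain of the closed operator. The two proofs are interchangeable in the logical structure of the paper, and yours is the more self-contained.
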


\begin{proof}
For every $ s \in \left[0,1\right]$ we can compute the Malliavin derivative, in the direction $e_i$, of the function $g$ as follows
\begin{equation}\label{e.dg}
D_s^ig:=\langle D_sg , e_i\rangle_{\mathbb{R}^n}= \int_0^1 B^i(t)D_s(B_s^i(t))\, {\rm d}t=\int_s^1B^i(t)\, {\rm d}t.
\end{equation}
\end{proof}

\begin{lemma}
\label{Lem1}
Let $\{B(t),\, t \in \left[0,1\right]\}$ be the standard $n$-dimensional Brownian motion and let $\gamma$ be defined as in Proposition \ref{hpg}.
Then for any $0 < \eta < 1$ there exists a constant $c$ such that 
\begin{equation}
\mathbb{P} \left( %\int_0^1 \left(\sum_{i=1}^n \left| \int_s^1B^i(t)\, {\rm d}t\right|^2\right)^{p}\, {\rm d}s 
\gamma < \eta \right) \le c \, \eta^{n}.
\end{equation}
\end{lemma}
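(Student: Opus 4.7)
The plan is to bound $\gamma$ from below by the Euclidean norm of a single, nondegenerate centered Gaussian vector in $\bR^n$, reducing the estimate to a textbook polar-coordinates calculation. The key observation is the triangle inequality applied to the $\bR^n$-valued integrand $D_\cdot g$:
\begin{equation*}
\gamma = \int_0^1 |D_s g| \, {\rm d}s \ge \biggl| \int_0^1 D_s g \, {\rm d}s \biggr|.
\end{equation*}

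Next I identify the right-hand side explicitly. Using Lemma \ref{derg} and Fubini's theorem componentwise,
\begin{equation*}
\int_0^1 D_s^i g \, {\rm d}s = \int_0^1 \int_s^1 B^i(t) \, {\rm d}t \, {\rm d}s = \int_0^1 t \, B^i(t) \, {\rm d}t,
\end{equation*}
so $X := \int_0^1 D_s g \, {\rm d}s$ has $n$ independent components, each a centered Gaussian random variable with variance $\int_0^1\!\int_0^1 s t \, (s\wedge t) \, {\rm d}s \, {\rm d}t = 2/15$. Hence $X$ is a centered Gaussian vector in $\bR^n$ with isotropic, nondegenerate covariance $\frac{2}{15} I_n$.

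The final step is routine: in polar coordinates the density of $|X|$ on $\bR_+$ is a constant multiple of $r^{n-1} \exp(-15 r^2/4)$, and in particular is bounded by $c_n r^{n-1}$. Therefore there exists $c > 0$ such that
\begin{equation*}
\bP(|X| < \eta) \le c \int_0^\eta r^{n-1} \, {\rm d}r = \frac{c}{n} \, \eta^n \qquad \text{for every } \eta \in (0,1).
\end{equation*}
Combining this with the first displayed inequality gives $\bP(\gamma < \eta) \le \bP(|X| < \eta) \le c \, \eta^n$, which is the claim.

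I do not anticipate any substantive obstacle. The only point at which the proof could in principle fail is if the Gaussian vector $X$ were degenerate, since then no $\eta^n$-type polar bound would hold; the explicit nonzero variance $2/15$ rules this out. Everything else --- Fubini's theorem, the triangle inequality for vector-valued integrals, and the small-ball estimate for the norm of a nondegenerate Gaussian vector in $\bR^n$ --- is standard.
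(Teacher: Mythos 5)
Your proof is correct and takes essentially the same route as the paper: both arguments lower-bound $\gamma=\int_0^1|D_sg|\,{\rm d}s$ by the norm of the single Gaussian vector $\int_0^1 D_sg\,{\rm d}s=\int_0^1 tB(t)\,{\rm d}t$ (whose nondegenerate isotropic covariance you correctly compute as $\tfrac{2}{15}I_n$, matching the paper's value obtained via $\tfrac12\int_0^1(1-t^2)\,{\rm d}B^i(t)$) and then invoke the small-ball estimate $\bP(|X|<\eta)\le c\,\eta^n$. The only differences are cosmetic: you apply the triangle inequality directly to the Euclidean norm and integrate in polar coordinates, where the paper passes through the $\ell^1$-norm and phrases the final step as a $\chi^2$ tail bound.
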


\begin{proof}
We compute, for $\eta \ll 1$,
\begin{align*}
\bP(\gamma < \eta) = 
%\bP\left( \int_0^1 \left[ \sum_{i=1}^n \left( \int_0^1 \uno_{(0,t)}(s) B(t) \, {\rm d}t \right)^2 \right]^{\frac12} \, {\rm d}s < \eta \right)
\bP\left( \int_0^1 |D_sg | \, {\rm d}s < \eta \right).
\end{align*}
Notice that every two norms in $\bR^n$ are equivalent, so there exists $c$ such that
\begin{align*}
\bP(\gamma < \eta) = 
%\bP\left( \int_0^1 \left[ \sum_{i=1}^n \left( \int_0^1 \uno_{(0,t)}(s) B(t) \, {\rm d}t \right)^2 \right]^{\frac12} \, {\rm d}s < \eta \right)
\bP\left( \int_0^1 |D_sg |_1 \, {\rm d}s < c \, \eta \right). %\qquad \text{where $\displaystyle |y|_1 = \sum_{i=1}^n |y_i|$.}
\end{align*}
Further, by H\"older's inequality,
\begin{align*}
\bP(\gamma < \eta) \le 
\bP\left( \left| \int_0^1 D_sg \, {\rm d}s \right|_1 < c \, \eta \right)
\end{align*}
and, using again the equivalence of norms, we find a different $c$ such that
\begin{align*}
\bP(\gamma < \eta) \le 
\bP\left( \left| \int_0^1 D_sg \, {\rm d}s \right| < c \, \eta \right)
\end{align*}
Notice that
\begin{align*}
\int_0^1 D^i_sg \, {\rm d}s =& \int_0^1 \int_0^1 \uno_{(0,t)}(s) B^i(t) \, {\rm d}t \, {\rm d}s
= \int_0^1 \int_0^1 \uno_{(0,t)}(s) \, {\rm d}s B^i(t) \, {\rm d}t
\\
=& \int_0^1 t B^i(t) \, {\rm d}t = \frac12 \int_0^1 (1 - t^2) \, {\rm d}B^i(t) =: Z_i
\end{align*}
is a family of independent, identically distributed Gaussian random variables with zero mean and variance $\sigma^2 = \int_0^1 \left(\frac{(1-t^2)}{2}\right)^2 \, {\rm d}t$.
Therefore, for some constant $c$ depending on $\sigma$,
\begin{align*}
\bP(\gamma < \eta) \le 
\bP\left( \sum_{i=1}^n \left(\tfrac1\sigma Z_i \right)^2 < c \eta^{2} \right).
\end{align*}
Since $Z_i$ are independent random variables, the sum in the right-hand side is a $\chi^2$-distribution with $n$ degrees of freedom, hence
the probability above is estimated by
\begin{align*}
\bP(\gamma < \eta) \le c \left(\eta^2\right)^{\frac n2} = c \eta^n.
\end{align*}
\end{proof}

\begin{lemma}
\label{lem1.5}
Let $\{B(t),\, t \in \left[0,1\right]\}$ be the standard $n$-dimensional Brownian motion and let $\gamma$ be defined as in Proposition \ref{hpg}.
Then 
$\mathbb{E}\left|\slantfrac {1}{\gamma}\right|^p< \infty$ for every $1 < p < n$.
\end{lemma}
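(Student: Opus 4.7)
The plan is to derive this moment bound directly from the tail estimate in Lemma \ref{Lem1} via the standard layer-cake representation. Since $\gamma \ge 0$ almost surely, I would write
\begin{equation*}
\mathbb{E}[\gamma^{-p}] = \int_0^\infty \mathbb{P}(\gamma^{-p} > t) \, {\rm d}t = p \int_0^\infty \lambda^{-p-1} \, \mathbb{P}(\gamma < \lambda) \, {\rm d}\lambda,
\end{equation*}
after the substitution $\lambda = t^{-1/p}$.

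Next I would split the $\lambda$-integral at $\lambda = 1$ and treat the two pieces separately. On $(1, \infty)$ we have the trivial bound $\mathbb{P}(\gamma < \lambda) \le 1$, so
\begin{equation*}
p \int_1^\infty \lambda^{-p-1} \, \mathbb{P}(\gamma < \lambda) \, {\rm d}\lambda \le p \int_1^\infty \lambda^{-p-1} \, {\rm d}\lambda = 1,
\end{equation*}
which is finite for every $p > 0$. On $(0, 1)$ I would invoke Lemma \ref{Lem1} to get $\mathbb{P}(\gamma < \lambda) \le c \lambda^n$, yielding
\begin{equation*}
p \int_0^1 \lambda^{-p-1} \, \mathbb{P}(\gamma < \lambda) \, {\rm d}\lambda \le c \, p \int_0^1 \lambda^{n-p-1} \, {\rm d}\lambda,
\end{equation*}
and this last integral converges precisely when $n - p - 1 > -1$, i.e.\ when $p < n$. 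Combining the two estimates gives $\mathbb{E}[\gamma^{-p}] < \infty$ for all $1 < p < n$, as claimed.

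There is essentially no obstacle here: the proof is a purely mechanical application of the tail bound of Lemma \ref{Lem1}, and the range $p < n$ matches exactly the exponent $n$ produced in that lemma. The only point of care is the splitting at $\lambda = 1$, which is needed because the upper tail of $\gamma^{-p}$ is controlled by the trivial bound rather than by Lemma \ref{Lem1}. It is worth noting that this is the step that transfers the dimensional assumption $n \ge 3$ into the eventual condition $1 < q < n/2$ appearing in Proposition \ref{hp:1}, since one will later need $p > 2$ via conjugate exponents in order to apply Proposition \ref{N}.
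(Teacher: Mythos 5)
Your proof is correct and follows essentially the same route as the paper: both use the layer-cake formula for $\mathbb{E}[\gamma^{-p}]$, split the resulting integral at the point corresponding to $\gamma = 1$, bound one piece trivially by $\mathbb{P} \le 1$ and the other by the tail estimate $\mathbb{P}(\gamma < \eta) \le c\,\eta^n$ of Lemma \ref{Lem1}, arriving at the condition $p < n$. The only difference is a cosmetic change of variables ($\lambda = 1/\tau$ relative to the paper's parametrization), so nothing further is needed.
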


\begin{proof}
Fixed $p>1$, we need to show that $\mathbb{E} \left|\slantfrac{1}{\gamma}\right|^p < \infty$.
We can write 
\begin{align*}
\mathbb{E} \left|\slantfrac{1}{\gamma}\right|^p 
&= \int_0^{\infty}\mathbb{P} \left(\frac{1}{\gamma^p}>\theta\right) \, {\rm d}\theta
=\int_0^{\infty}\mathbb{P} \left(\gamma< \frac{1}{\theta^{\frac 1p}}\right) \, {\rm d}\theta
=p\int_0^{\infty}\mathbb{P} \left(\gamma< \frac{1}{\tau}\right) \tau^{p-1} \, {\rm d}\tau
\\
&=p\int_0^1\mathbb{P} \left(\gamma< \frac{1}{\tau}\right) \tau^{p-1} \, {\rm d}\tau
+
p\int_1^{\infty}\mathbb{P} \left(\gamma< \frac{1}{\tau}\right) \tau^{p-1} \, {\rm d}\tau.
\end{align*}
The first integral in the last line of the above expression is finite since $\mathbb{P}$ is a probability measure and thus $\mathbb{P} \left(\gamma< \frac{1}{\tau}\right)\le 1$. Thus it is sufficient to show the convergence of the second integral.

%Since
%\begin{equation*}
%\gamma=\int_0^1 \left(\sum_{i=1}^n \left| \int_s^1B^i(t)\, {\rm d}t\right|^2\right)^{\frac 12}\, {\rm d}s,
%\end{equation*}
Thanks to Lemma \ref{Lem1} we immediately get
\begin{align*}
\int_1^{\infty}\mathbb{P} \left(\gamma< \frac{1}{\tau}\right) \tau^{p-1} \, {\rm d}\tau
&\le C \int_1^{\infty}\tau^{- n+p-1}\,{\rm d}\tau,
\end{align*}
which is finite provided $n>p$.
\end{proof}

\begin{lemma}
\label{lem1.6}
Let $\{B(t),\, t \in \left[0,1\right]\}$ be the standard $n$-dimensional Brownian motion and let $\gamma$ be defined as in Proposition \ref{hpg}.
Then $\gamma \in \mathbb{D}^{1,p}$ for every $p > 1$ and its Malliavin derivative (in the direction $i$) is
\begin{equation}
\label{D0}
D_{\theta}^i\gamma=\int_0^1 \frac{1}{|D_sg |}\int_s^1B^i(r)\, {\rm d}r\int_s^1\uno_{(0,t)}(\theta)\, {\rm d}t\, {\rm d}s.
\end{equation}
Moreover $\slantfrac {1}{\gamma} \in \mathbb{D}^{1,p}$ for any $1 < p < \frac{n}2$ and its Malliavin derivative is given by
\begin{equation}
\label{D1}
D_{\theta}^i\left(\slantfrac{1}{\gamma}\right)=- \frac{D^i_{\theta} \gamma}{\gamma^2}.
\end{equation}
\end{lemma}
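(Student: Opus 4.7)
The plan is to handle (i) and (ii) separately, in each case approximating a non-smooth scalar function of an already differentiable quantity by a smooth family, computing the Malliavin derivative of the approximation via the chain rule, and passing to the limit using closability of $D$. The two singularities are: in (i), the Euclidean norm $|\cdot|$ at the origin; in (ii), the reciprocal $x\mapsto 1/x$ at $0$, which will be controlled by the negative-moment estimate of Lemma~\ref{lem1.5}.

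For (i), by Lemma~\ref{derg} the components $Y_s^i := D_s^i g = \int_s^1 B^i(r)\,\mathrm{d}r$ belong to $\bD^{1,p}$ for every $p\ge 1$, with $D_\theta^j Y_s^i = \delta_{ij}\int_s^1 \uno_{(0,t)}(\theta)\,\mathrm{d}t$. I would introduce the smooth approximations $\phi_\epsilon(y) = \sqrt{|y|^2+\epsilon^2}-\epsilon$ on $\bR^n$: these are Lipschitz with $|\nabla\phi_\epsilon(y)|\le 1$ uniformly in $\epsilon$, they satisfy $\phi_\epsilon(y)\uparrow |y|$, and $\nabla\phi_\epsilon(y) = y/\sqrt{|y|^2+\epsilon^2}\to y/|y|$ on $\{y\ne 0\}$. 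Setting $\gamma_\epsilon:=\int_0^1 \phi_\epsilon(Y_s)\,\mathrm{d}s$, the chain rule yields
\begin{equation*}
D_\theta^i\gamma_\epsilon = \int_0^1 \frac{Y_s^i}{\sqrt{|Y_s|^2+\epsilon^2}}\int_s^1 \uno_{(0,t)}(\theta)\,\mathrm{d}t\,\mathrm{d}s.
\end{equation*}
The integrand is uniformly bounded in $\epsilon$ (by $1$) and converges pointwise to the integrand of \eqref{D0} on the full-measure set where $Y_s\ne 0$, so dominated convergence gives $\gamma_\epsilon\to\gamma$ in $L^p(E,\mu)$ and $D\gamma_\epsilon\to D\gamma$ in $L^p(E,\mu;H)$ for every $p\ge 1$; closability of $D$ then delivers $\gamma\in\bD^{1,p}$ with derivative \eqref{D0}.

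For (ii), I would approximate $x\mapsto 1/x$ on $[0,\infty)$ by $\psi_\epsilon(x)=1/(x+\epsilon)$ and apply the Malliavin chain rule to $\psi_\epsilon(\gamma)$, producing $D\psi_\epsilon(\gamma) = -D\gamma/(\gamma+\epsilon)^2$, which converges almost surely to $-D\gamma/\gamma^2$. To pass to the $L^q$-limit I would use H\"older's inequality with exponents $a,b>1$, $1/a+1/b=1$:
\begin{equation*}
\bE\left[\frac{\|D\gamma\|_H^q}{\gamma^{2q}}\right] \le \bE\bigl[\|D\gamma\|_H^{qa}\bigr]^{1/a}\,\bE\bigl[\gamma^{-2qb}\bigr]^{1/b}.
\end{equation*}
The first factor is finite by (i) for any $a$, while the second is finite by Lemma~\ref{lem1.5} provided $2qb<n$; taking $a$ arbitrarily large (so $b\downarrow 1$) this reduces to $2q<n$, i.e.\ $q<n/2$. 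Dominated convergence then yields $1/\gamma\in\bD^{1,q}$ and \eqref{D1} for all such $q$. The sharp range $q<n/2$ is the delicate point: it is precisely where the dimensional hypothesis $n\ge 3$ is required to admit some $q>1$, and it depends decisively on the sharpness of Lemma~\ref{lem1.5}. A minor observation is that the apparent singularity $1/|D_s g|$ in \eqref{D0} is harmless, since it is multiplied by $D_s^i g$ and the resulting ratio is bounded by $1$.
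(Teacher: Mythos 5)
Your proof is correct and follows essentially the same route as the paper: chain rule for $\gamma$ (the paper applies it directly to the Lipschitz map $|\cdot|$, you regularize with $\sqrt{|y|^2+\varepsilon^2}$), followed by reduction of $\bE\|D(1/\gamma)\|_H^p$ to the negative moments of $\gamma$ supplied by Lemma~\ref{lem1.5}. The only cosmetic difference is in the last step: the paper uses the pointwise bound $\|D_{\cdot}\gamma\|_H\le 1$ (obtained by Cauchy--Schwarz in $s$, precisely your ``minor observation'' that $|D_sg|^{-1}D_s^ig$ is bounded) to get $\bE\|D(1/\gamma)\|_H^p\le\bE|1/\gamma|^{2p}$ directly, whereas you invoke H\"older with a large exponent on $\|D\gamma\|_H$; both arguments yield the same range $p<n/2$.
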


\begin{proof}
By the chain rule and Lemma \ref{derg} we have that $\gamma \in \mathbb{D}^{1,p}$ for every $p>1$ and 
\begin{align*}
D_{\theta}^i \gamma
&=D_{\theta}^i\left(\int_0^1 \left(\sum_{i=1}^n \left| \int_s^1B^i(t)\, {\rm d}t\right|^2\right)^{\frac 12}\, {\rm d}s\right)
= \int_0^1\frac{1}{ |D_sg |}\int_s^1B^i(r)\,{\rm d}r \int_s^1\uno_{(0,t)}(\theta)\, {\rm d}t \,{\rm d}s.
\end{align*}
Therefore,
applying  H\"older's inequality, from \eqref{D0} we get 
\begin{align*}
|D_{\theta}\gamma|^2
&=  \sum_{i=1}^n \left|  \int_0^1 \frac{1}{ |D_sg |}\int_s^1B^i(r)\, {\rm d}r\int_s^1\uno_{(0,t)}(\theta)\, {\rm d}t\, {\rm d}s \right|^2
\\
&\le \sum_{i=1}^n  \int_0^1 \frac{1}{ | D_sg |^2} \left| \int_s^1B^i(r)\, {\rm d}r\int_s^1\uno_{(0,t)}(\theta)\, {\rm d}t \right|^2 \, {\rm d}s, 
\end{align*}
{taking the sum inside the integral we can simplify and we get}
\begin{align}\label{e:Dgamma}
|D_{\theta}\gamma |^2 &\le 1.
\end{align}

Equality \eqref{D1} is straightforward to prove. 
It remains to prove that $\frac{1}{\gamma} \in \mathbb{D}^{1,p}$. 
In view of Lemma \ref{lem1.5} it is sufficient to show that
\begin{equation}
\label{D2}
\mathbb{E}\left\Vert D \left(\slantfrac{1}{\gamma}\right)\right\Vert_{H}^p< \infty.
\end{equation} 
By means of estimate \eqref{e:Dgamma} we get
\begin{align*}
\mathbb{E}\left\Vert D_{\cdot}\left(\frac{1}{\gamma}\right)\right\Vert_{H}^p
&=\mathbb{E}\left\Vert\left(\frac{D_{\cdot}\gamma}{\gamma^2}\right)\right\Vert_{H}^p
=\mathbb{E}\left[ \frac{\|D_{\cdot}\gamma\|_{H}}{|\gamma|^2}\right]^p
\le \mathbb{E} \left|\slantfrac{1}{\gamma}\right|^{2 p}.
\end{align*}
By Lemma \ref{lem1.5}, the term $\mathbb{E} \left|\slantfrac{1}{\gamma}\right|^{2 p}$ is finite provided that $n \ge 3 p$
and this concludes the proof.
\end{proof}

We see that the condition on the dimension $n$ appears in the statement of previous proposition, since it is necessary that $\frac{n}{2} > 1$, i.e., $n \ge 3$ in order to have a non degenerate interval for $p$.

We are now ready to prove Proposition \ref{hpg}.

\subsection{Proof of Proposition \ref{hpg}}

Thanks to Lemma \ref{derg} we know that $g \in \mathbb{D}^{1,p}$, for every $p > 1$. 
By definition, compare \eqref{e:prop32}, we verify that
\begin{equation}
\label{a}
\langle Dg, u \rangle_H = \int_0^1 \langle D_sg, u(s) \rangle_{\bR^n} \, {\rm d}s=\gamma.
\end{equation}
It remains to prove that 
\begin{align*}
\tfrac{u}{\gamma} \in D_{q}(\delta) \text{ for all $1 < q < \tfrac{n}{2}$.}
\end{align*}
In order to factor out a scalar random variable from a Skorohod integral we can appeal to Proposition \ref{N}. 
From Lemma \ref{lem1.6} we know that $\frac{1}{\gamma} \in \bD^{1,p}$ for every $1 < p < \frac{n}{2}$; 
we claim that $u \in D_{r}(\delta)$ for every $r > 1$. Therefore, by previous proposition, we have
$\frac{u}{\gamma} \in D_{q}(\delta)$ for $\frac1q = \frac1p + \frac1r$ and, since $r$ is arbitrary and $p < \frac{n}{2}$, we obtain that
\begin{align*}
\tfrac{u}{\gamma} \in D_{q}(\delta) \text{ for all $q < \tfrac{n}{2}$.}
\end{align*}

It remains to verify the claim. Since $u$ is a process adapted to the future and $|u|\le 1$, $u$ is backward It\^o integrable. 
Then, we can mimic the construction given in \cite[Section 1.3.3]{Nualart2006} for %$L^2_a$ of 
square integrable, adapted processes to be Skorohod integrable, and prove that $u$ belongs to $D_r(\delta)$
for any $r$.

\section{Proof of Theorem \ref{t1}: existence of the surface measure for sets defined by the Brownian motion}
\label{sez2}
\textcolor{black}
{
In this section, by mimicking the construction provided in \cite{Bonaccorsi2018} we construct the surface measure induced by $\mu$ on the level sets $\{g = r\}$.
}
Recall that $g$ is the random variable defined by
\begin{align*}
g(x) = \frac12 \|x\|^2_H, \qquad x\in E;
\end{align*}
by construction, $g \ge 0$. 
\textcolor{black}{Moreover, thanks to Proposition \ref{hpg}, $g$ satisfies the Malliavin condition of Proposition \ref{hp:1}.}
\newline
As stated in the Introduction, we study the family of functions $F_X(r)$ indexed by (suitably regular) random variables $X$, where
\begin{align*}
F_X(r) = \bE[\uno_{\{g \le r\}} X], \qquad r \ge 0.
\end{align*}
In this section, we first assume that  $X$ is a random variable in $\bD^{1,p}$, for some $p > \frac{n}{n-2}$ \textcolor{black}{(as it will be clear later on, this condition stems from the requirement $n \ge 3$ in Proposition \ref{hpg})}.
Let 
\begin{align*}
\phi(u) = \int_{0}^u  \uno_{\{y \le r\}} \, {\rm d}y.
\end{align*}
$\phi$ is a Lipschitz continuous function, hence it is possible to compute the Malliavin derivative
\begin{align}\label{e:i5}
D \phi(g) = \phi'(g) D g;
\end{align}
scalar multiplying both sides of \eqref{e:i5} with $u$ and $X$ implies, after a little algebra
\begin{align*}
X \, \left\langle D \phi(g), \tfrac{u}{\gamma} \right\rangle_H = \phi'(g) \, X = X \, \uno_{\{g \le r\}};
\end{align*}
thus, the duality relationship between Malliavin derivative and Skorohod integral leads to
\begin{align}\label{e:2.3}
F_X(r) = \bE[X \uno_{\{g \le r\}}] = \bE \left[X \, \left\langle D \phi(g), \tfrac{u}{\gamma} \right\rangle_H \right] = \bE \left[\delta\left(X \, \tfrac{u}{\gamma}\right) \, \phi(g) \right].
\end{align}
\textcolor{black}{Notice that in order for the last term in \eqref{e:2.3} to be well defined, we need to have $X\frac{u}{\gamma} \in D_{\theta}(\delta)$ for some $\theta$. We postpone the verification of this fact to Subsection \ref{dif_sec} and we start by considering the special case $X \equiv 1$. This case allows us to study the probability density function of $g$.}

\subsection{Existence of the probability density function for $g$}

By taking $X \equiv 1$, the above reasoning leads to the existence of a density for the cumulative distribution function of the random variable $g$,
as already proved by Nualart \cite[Proposition 2.1.1]{Nualart2006}:
\begin{align*}
\bP(g \le r) = F_1(r) =& \bE \left[ \delta\left(\tfrac{u}{\gamma}\right) \, \int_{0}^r \uno_{\{y < g\}} \, {\rm d}y  \right]
\end{align*}
{and, by an application of Fubini's theorem, we get the following expression, which easily led to the existence of a density:}
\begin{align}\label{eq:numero}
F_1(r) =& \int_{0}^r  \bE \left[ \delta\left(\tfrac{u}{\gamma}\right)  \uno_{\{g > y\}}  \right] \, {\rm d}y.
\end{align}

\begin{proposition}
\label{p41}
Assume that there exist $u$ and $\gamma$ such that Proposition \ref{hp:1} holds.
Then the mapping $s \mapsto F_1(s)$ is continuous.
\end{proposition}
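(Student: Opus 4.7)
The plan is to leverage the integral representation \eqref{eq:numero} for $F_1$ and to show that the integrand is uniformly bounded in $y$, from which Lipschitz continuity (and hence continuity) of $F_1$ will follow immediately.

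First, I would invoke Proposition \ref{hp:1}: since $n \ge 3$, the interval $1 < q < n/2$ is non-empty, and for any such $q$ one has $\tfrac{u}{\gamma} \in D_q(\delta)$. In particular, $\delta(\tfrac{u}{\gamma}) \in L^q(E,\mu)$. Because $\mu$ is a probability measure, $L^q(E,\mu) \subset L^1(E,\mu)$, so the constant
$$C := \bE\bigl[ |\delta(\tfrac{u}{\gamma})| \bigr]$$
is finite. This is the only place where the hypothesis $n \ge 3$ is used, and it is already packaged in Proposition \ref{hp:1}.

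Setting $h(y) := \bE[\delta(\tfrac{u}{\gamma}) \uno_{\{g > y\}}]$ and using $|\uno_{\{g>y\}}| \le 1$, we get $|h(y)| \le C$ for every $y \ge 0$. Then by \eqref{eq:numero}, for any $0 \le s_1 \le s_2$,
$$|F_1(s_2) - F_1(s_1)| = \left| \int_{s_1}^{s_2} h(y) \, {\rm d}y \right| \le C (s_2 - s_1),$$
which proves that $F_1$ is Lipschitz on $[0,\infty)$, and in particular continuous.

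There is no genuine obstacle at this stage: the essential work was done in establishing the Malliavin condition of Proposition \ref{hpg}. Continuity of $F_1$ is then a direct corollary of the representation \eqref{eq:numero} together with the $L^1$-integrability of the Skorohod integral $\delta(u/\gamma)$; no further tightness or dominated convergence argument beyond the trivial bound on $h$ is needed.
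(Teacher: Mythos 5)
Your proof is correct and follows essentially the same route as the paper: both rely on the representation \eqref{eq:numero} and the boundedness of the integrand $G(y)=\bE[\delta(\tfrac{u}{\gamma})\uno_{\{g>y\}}]$, which the paper dismisses as holding ``by assumption'' and you justify explicitly via $\delta(\tfrac{u}{\gamma})\in L^q(E,\mu)\subset L^1(E,\mu)$. Your version is slightly more informative in that it records the Lipschitz constant $C=\bE[|\delta(\tfrac{u}{\gamma})|]$, but there is no substantive difference in approach.
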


\begin{proof}
The integrand function $G: y \mapsto \bE \left[ \delta\left(\tfrac{u}{\gamma}\right)  \uno_{\{g > y\}}  \right]$, defined in \eqref{eq:numero}, is measurable and bounded, by assumption, hence the
statement is obvious.
\end{proof}

As a consequence of previous proposition, the mapping $G$ is also continuous, since
\begin{align*}
|G(y + \varepsilon) - G(y)| \le& \bE \left[ \left| \mathop\delta\left(\tfrac{u}{\gamma}\right)  \right| \uno_{\{y < g < y + \varepsilon\}} \right]
\\
\le& \bE \left[ \left| \delta\left(\tfrac{u}{\gamma}\right)  \right|^{q} \right]^{1/q} \left(F_1(y+\varepsilon) - F_1(y) \right)^{1/q'},
\end{align*}
where $q'$ is the conjugate exponent of $q$;
therefore, we can apply the integral mean value theorem to get the following.

\begin{proposition}\label{p2.1}
There exists the derivative $f_1(r) = F_1'(r)$ and it is equal to 
\begin{align*}
F_1'(r) = \bE \left[ \delta\left(\tfrac{u}{\gamma}\right)  \uno_{\{g > r\}}  \right]
\end{align*}
\end{proposition}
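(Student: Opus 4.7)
The plan is to read off differentiability of $F_1$ from the integral representation already obtained, using the continuity statement that is essentially handed to us on a plate by the preceding display.

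First I would record the identity $F_1(r) = \int_0^r G(y)\,dy$, where
$$G(y) = \mathbb{E}\!\left[\delta\!\left(\tfrac{u}{\gamma}\right)\uno_{\{g>y\}}\right],$$
which is the content of \eqref{eq:numero}. Then I would observe that, by Proposition \ref{hp:1}, $u/\gamma \in D_q(\delta)$ for some $q \in (1, n/2)$, so $\delta(u/\gamma) \in L^q(E,\mu)$; in particular the random variable $\delta(u/\gamma)$ is finite a.s.\ and all the expectations appearing below are well defined.

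Next I would justify continuity of $G$ by the Hölder-type estimate already displayed just before the statement of Proposition \ref{p2.1}: for $\eps>0$,
$$|G(y+\eps)-G(y)| \le \mathbb{E}\!\left[\left|\delta\!\left(\tfrac{u}{\gamma}\right)\right|^{q}\right]^{1/q}\bigl(F_1(y+\eps)-F_1(y)\bigr)^{1/q'},$$
and similarly for $\eps<0$. Since $F_1$ is itself continuous by Proposition \ref{p41}, the right-hand side tends to $0$ as $\eps\to 0$, so $G$ is continuous on $[0,\infty)$.

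Finally I would invoke the fundamental theorem of calculus (in the form of the integral mean value theorem, as suggested in the text): for any $r>0$,
$$\frac{F_1(r+\eps)-F_1(r)}{\eps} = \frac{1}{\eps}\int_r^{r+\eps} G(y)\,dy \; \xrightarrow[\eps\to 0]{}\; G(r),$$
the one-sided version at $r=0$ being analogous. This yields $F_1'(r)=G(r)=\mathbb{E}[\delta(u/\gamma)\uno_{\{g>r\}}]$, completing the proof. There is essentially no obstacle here beyond bookkeeping: all the analytic work was done in proving integrability of $\delta(u/\gamma)$ (Proposition \ref{hp:1}) and continuity of $F_1$ (Proposition \ref{p41}); the passage to differentiability is just the fundamental theorem of calculus applied to a continuous integrand.
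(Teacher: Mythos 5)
Your proposal is correct and follows essentially the same route as the paper: it uses the representation \eqref{eq:numero}, deduces continuity of the integrand $G$ from the H\"older estimate together with the continuity of $F_1$ (Proposition \ref{p41}), and concludes by the fundamental theorem of calculus. The only (harmless) additions are the explicit remarks about $\delta(u/\gamma)\in L^q(E,\mu)$ and the one-sided limits, which the paper leaves implicit.
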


\begin{proof}
Since
\begin{align*}
\frac{1}{\varepsilon}\left( F_1(r + \varepsilon) - F_1(r) \right) = \frac{1}{\varepsilon} \int_r^{r + \varepsilon} G(y) \, {\rm d}y
\end{align*}
and the integrand function $G$ is continuous, the thesis follows by letting $\varepsilon \to 0$.
\end{proof}

\begin{corollary}\label{c2.3}
The random variable $g$ has a probability density function $f_1(r)$ that is continuous and bounded.
\end{corollary}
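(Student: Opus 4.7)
The plan is to assemble the three required properties (existence of the density, continuity, and boundedness) from the tools already developed in this subsection, with no new heavy machinery needed.

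First I would observe that by Proposition \ref{p2.1} the cumulative distribution function $F_1$ is everywhere differentiable with derivative $f_1(r) = G(r) = \bE[\delta(u/\gamma)\,\uno_{\{g>r\}}]$, and that $G$ has already been shown to be continuous (via the Hölder estimate $|G(y+\eps)-G(y)| \le \bE[|\delta(u/\gamma)|^q]^{1/q}(F_1(y+\eps)-F_1(y))^{1/q'}$ combined with the continuity of $F_1$ from Proposition \ref{p41}). Since $g \ge 0$ gives $F_1(0) = 0$, and $F_1$ is continuously differentiable on $[0,\infty)$, the fundamental theorem of calculus yields
\begin{equation*}
F_1(r) = \int_0^r f_1(y)\,\mathrm{d}y, \qquad r \ge 0,
\end{equation*}
which exhibits $f_1$ as the probability density function of $g$ with respect to Lebesgue measure.

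Continuity of $f_1$ is then immediate, since $f_1 \equiv G$ and $G$ is continuous. For the boundedness, I would apply Hölder's inequality directly to the representation provided by Proposition \ref{p2.1}: for any conjugate pair $(q,q')$ with $1 < q < n/2$ (such a pair exists precisely because $n \ge 3$),
\begin{equation*}
|f_1(r)| = \left| \bE\!\left[ \delta\!\left(\tfrac{u}{\gamma}\right) \uno_{\{g>r\}} \right] \right| \le \bE\!\left[ \left| \delta\!\left(\tfrac{u}{\gamma}\right) \right|^q \right]^{1/q} \bP(g>r)^{1/q'} \le \left\| \delta\!\left(\tfrac{u}{\gamma}\right) \right\|_{L^q(E,\mu)}.
\end{equation*}
The right-hand side is finite and independent of $r$, because Proposition \ref{hp:1} (verified in Proposition \ref{hpg}) guarantees $u/\gamma \in D_q(\delta)$ for every $1 < q < n/2$. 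This uniform bound concludes the proof.

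The only delicate point is that the boundedness argument requires some admissible exponent $q>1$ in the divergence domain, which is exactly where the dimensional restriction $n\ge 3$ (from Proposition \ref{hpg}) enters; once this is on hand, the rest is just Hölder's inequality and a monotone tail probability bound.
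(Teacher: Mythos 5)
Your proposal is correct and follows essentially the same route as the paper: the corollary is read off from the representation $F_1(r)=\int_0^r \bE[\delta(u/\gamma)\uno_{\{g>y\}}]\,{\rm d}y$ of \eqref{eq:numero}, the continuity of the integrand established after Proposition \ref{p41}, and its boundedness via $\delta(u/\gamma)\in L^q(E,\mu)$ for some $1<q<n/2$. Your only addition is to spell out the H\"older bound for boundedness, which the paper leaves implicit.
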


Actually, the existence of this density is already known in the literature, as well as the explicit form of this function, see \cite[Part II.4, formula (1.9.4), page 377]{Borodin2002}.

\subsection{Differentiability of $F_X$}
\label{dif_sec}
%\textcolor{red}{Ho riscritto questa parte come Lemma.}
\begin{lemma}
\label{l46}
Let $n \ge 3$. If $X \in \mathbb{D}^{1,p}$, for some $p>\frac{n}{n-2}$ and $\frac{u}{\gamma} \in D_q(\delta)$, with $q < \frac{n}{2}$ (cfr.\ Proposition \ref{hp:1}) then $X\frac{u}{\gamma} \in D_{\theta}(\delta)$ for some $\theta < \frac{np}{n+2p}$.
\end{lemma}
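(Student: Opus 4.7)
The natural plan is to invoke Proposition \ref{N} (the factoring rule for the divergence) with $F = X \in \mathbb{D}^{1,p}$ and $v = \frac{u}{\gamma}$, and then to check that the exponents can be arranged so that both sides lie in an appropriate $L^\theta$ space. By Proposition \ref{hp:1} we have $\frac{u}{\gamma} \in D_q(\delta)$ for every $1 < q < \frac{n}{2}$, so the only thing to verify is an elementary exponent computation.

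Define $\theta$ by $\frac{1}{\theta} = \frac{1}{p} + \frac{1}{q}$, with $q$ to be chosen in $(1, \frac{n}{2})$. First I would check that $\theta > 1$, i.e.\ $\frac{1}{p}+\frac{1}{q}<1$. The hypothesis $p > \frac{n}{n-2}$ rewrites as $\frac{1}{p} < \frac{n-2}{n} = 1-\frac{2}{n}$, so $1 - \frac{1}{p} > \frac{2}{n}$. Since $q$ may be taken arbitrarily close to $\frac{n}{2}$ from below, one can find $q < \frac{n}{2}$ with $\frac{1}{q} \in \bigl(\frac{2}{n},\, 1 - \frac{1}{p}\bigr)$; the existence of this interval is exactly the content of the hypothesis $p > \frac{n}{n-2}$. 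This gives $\theta > 1$, so the divergence $\delta$ is well-defined on $L^\theta(E,\mu;H)$.

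Next I would verify the announced upper bound $\theta < \frac{np}{n+2p}$. Rewriting, this is equivalent to $\frac{1}{\theta} > \frac{n+2p}{np} = \frac{1}{p} + \frac{2}{n}$, which is immediate from $\frac{1}{\theta} = \frac{1}{p} + \frac{1}{q}$ together with $\frac{1}{q} > \frac{2}{n}$ (coming from $q < \frac{n}{2}$).

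With these choices of $p$, $q$, $\theta$, Proposition \ref{N} applies and yields $X\,\frac{u}{\gamma} \in D_\theta(\delta)$, together with the explicit formula
\begin{equation*}
\delta\!\left(X\,\tfrac{u}{\gamma}\right) = X\,\delta\!\left(\tfrac{u}{\gamma}\right) - \left\langle \tfrac{u}{\gamma},\, DX \right\rangle_H,
\end{equation*}
which is exactly what is needed to justify rigorously the identity \eqref{e:2.3}. The step that requires the most care is the exponent bookkeeping: the constraint $p > \frac{n}{n-2}$ is precisely the threshold that allows $\frac{1}{p}+\frac{1}{q}<1$ while simultaneously keeping $q < \frac{n}{2}$, so that Proposition \ref{hp:1} can be combined with Proposition \ref{N}. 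Everything else reduces to an application of the stated factoring lemma.
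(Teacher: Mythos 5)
Your proof is correct and takes essentially the same route as the paper: the paper's own argument is just Proposition \ref{N} unwound by hand (duality, the product rule for $\delta$, and two applications of H\"older with the exponent relation $\frac{1}{\theta}=\frac{1}{p}+\frac{1}{q}$), arriving at the same constraint $\theta<\frac{np}{n+2p}$ and the same role for the threshold $p>\frac{n}{n-2}$. The only difference is that the paper's explicit computation also records the quantitative byproduct $\bigl|\bE\langle DY,X\,u/\gamma\rangle_H\bigr|\le c\,\bE[|Y|^{\theta'}]^{1/\theta'}\|X\|_{\bD^{1,p}}\,\|u/\gamma\|_{D_{p\theta/(p-\theta)}(\delta)}$, which is echoed later in the bound \eqref{e2.5}, whereas your invocation of Proposition \ref{N} delivers membership in $D_\theta(\delta)$ and the product formula directly.
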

%We first prove that $X \frac{u}{\gamma}$ belongs to $D_{\theta}(\delta)$ , $p$ being the order of integrability of $X$ and $n$ the space dimension of the Brownian motion $B$. 
\begin{proof}
By definition, this requires to show that for any smooth random variable $Y$
\begin{align*}
\left|\bE\left\langle D Y, X \tfrac{u}{\gamma} \right\rangle_H \right| < c \, \bE[|Y|^{\theta'}]^{1/\theta'}, 
\end{align*}
with $\frac{1}{\theta}+ \frac{1}{\theta'}=1$.
Recall the integration by parts formula
\begin{multline}
\label{e:ibp1}
{\color{black}\bE} \left\langle D Y, X \tfrac{u}{\gamma} \right\rangle_H = {\color{black}\bE} \left[\left\langle D(XY), \tfrac{u}{\gamma} \right\rangle_H - \left\langle D X, Y \tfrac{u}{\gamma} \right\rangle_H \right]
\\
= {\color{black}\bE} \left[ X Y \mathop{\delta}\left(\tfrac{u}{\gamma}\right) - Y \left\langle D X, \tfrac{u}{\gamma} \right\rangle_H \right];
\end{multline}
it follows
\begin{align*}
\left|\bE\left\langle D Y, X \tfrac{u}{\gamma} \right\rangle_H \right| \le& \bE\left[ |Y| \, \left|X \mathop{\delta}\left(\tfrac{u}{\gamma}\right) -  \left\langle D X, \tfrac{u}{\gamma} \right\rangle_H\right|\right]
\end{align*}
and by H\"older's inequality (in the sequel we exploit the assumption $\theta > p'$, that is equivalent to $\theta' < p$)
\begin{align*}
\le& c \, \bE[|Y|^{\theta'}]^{1/\theta'} \, \left(\left[ \bE \left|X \mathop{\delta}\left(\tfrac{u}{\gamma}\right) \right|^{\theta} \right]^{1/\theta} +   \left[ \bE \left| \left\langle D X, \tfrac{u}{\gamma} \right\rangle_H\right|^{\theta}\right]^{1/\theta}\right)
\\
\le& c \, \bE[|Y|^{\theta'}]^{1/\theta'} \, \left(\left[ \bE |X|^{p} \right]^{1/p} 
\left[ \bE \left| \delta \left(\tfrac{u}{\gamma} \right) \right|^{\frac{p \theta}{p - \theta}} \right]^{\frac{p - \theta}{p \theta}}
+
\left[ \bE \|D X\|^{p}_H \right]^{1/p} 
\left[ \bE \left\| \tfrac{u}{\gamma} \right\|_H^{\frac{p \theta}{p - \theta}} \right]^{\frac{p - \theta}{p \theta}} \right)
\\
\le& c \, \bE[|Y|^{\theta'}]^{1/\theta'} \, \left(  \|X\|_{\bD^{1,p}} \, \left\| \tfrac{u}{\gamma} \right\|_{\textcolor{black}{D_{\frac{p \theta}{p - \theta}}(\delta)}}  \right)
\end{align*}
Recall the bound in \eqref{e:boundq}.
Then we shall require
\begin{align*}
1 < \frac{p \theta}{p - \theta} < \frac{n}{2} \qquad \Leftrightarrow \qquad \theta < \frac{np}{n+2p}.
\end{align*}
Notice that $p > \frac{n}{n-2}$ implies that $\frac{np}{n+2p} > 1$.
\end{proof}

We return to formula \eqref{e:2.3}. \textcolor{black}{Previous lemma guarantees the well posedness of the last term in \eqref{e:2.3}. }
Proceeding now in the same way we did in previous subsection, an application of Fubini's theorem implies that
\begin{align*}
F_X(r) =&  \int_{0}^r  \bE \left[ \delta\left(X \tfrac{u}{\gamma}\right)  \uno_{\{g > y\}}  \right] \, {\rm d}y.
\end{align*}
We can finally state the main result in this section. 

\begin{proposition}\label{p2.4}
Let $X$ belongs to $\bD^{1.p}$, $p > \frac{n}{n-2}$. Then there exists the derivative $f_X(r) = F_X'(r)$ and it is equal to 
\begin{align}\label{e:deriv-F}
F_X'(r) = \bE \left[ \delta\left(X \tfrac{u}{\gamma}\right)  \uno_{\{g > r\}}  \right].
\end{align}
Moreover, $f_X(r)$ is a continuous and bounded function and there exists a constant $c > 0$ such that
\begin{align}\label{e2.5}
|F'_X(r)| \le c \|X\|_{\bD^{1,p}}.
\end{align}
\end{proposition}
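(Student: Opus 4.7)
The plan is to mirror the argument already carried out for $X\equiv 1$ in Propositions \ref{p41} and \ref{p2.1}, now using Lemma \ref{l46} to make sense of the key quantity $\delta(X\tfrac{u}{\gamma})$. First I would fix an exponent $\theta$ with $p' < \theta < \tfrac{np}{n+2p}$, as allowed by the hypothesis $p > \tfrac{n}{n-2}$, so that by Lemma \ref{l46} the process $X\tfrac{u}{\gamma}$ belongs to $D_{\theta}(\delta)$ and the identity
\begin{equation*}
F_X(r) = \int_0^r G_X(y)\,{\rm d}y, \qquad G_X(y) := \bE\!\left[\delta\!\left(X\tfrac{u}{\gamma}\right)\uno_{\{g > y\}}\right],
\end{equation*}
that was obtained just before the statement via Fubini is legitimate.

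The next step is to establish the continuity and boundedness of $G_X$. For boundedness I would simply estimate
\begin{equation*}
|G_X(y)| \le \bE\!\left[\left|\delta\!\left(X\tfrac{u}{\gamma}\right)\right|\right] \le \left\|\delta\!\left(X\tfrac{u}{\gamma}\right)\right\|_{L^\theta(E,\mu)}
\end{equation*}
and then invoke the estimates produced inside the proof of Lemma \ref{l46}, which yield $\|\delta(X\tfrac{u}{\gamma})\|_{L^\theta} \le c\,\|X\|_{\bD^{1,p}}\,\|\tfrac{u}{\gamma}\|_{D_{p\theta/(p-\theta)}(\delta)}$, the last factor being finite and independent of $X$ and $r$. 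For continuity at a fixed $y$, I would copy verbatim the H\"older argument used after Proposition \ref{p41}: for $\varepsilon > 0$,
\begin{equation*}
|G_X(y+\varepsilon)-G_X(y)| \le \left\|\delta\!\left(X\tfrac{u}{\gamma}\right)\right\|_{L^\theta}\bigl(F_1(y+\varepsilon)-F_1(y)\bigr)^{1/\theta'},
\end{equation*}
and conclude via Corollary \ref{c2.3}, which guarantees that $F_1$ is continuous (indeed, $C^1$ with bounded density).

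With $G_X$ shown to be continuous, the identity $F_X(r) = \int_0^r G_X(y)\,{\rm d}y$ together with the fundamental theorem of calculus immediately gives the differentiability of $F_X$ and the formula $F_X'(r) = G_X(r) = \bE[\delta(X\tfrac{u}{\gamma})\uno_{\{g > r\}}]$, yielding both \eqref{e:deriv-F} and the continuity and boundedness of $f_X$. The quantitative bound \eqref{e2.5} is then just a restatement of the boundedness step, since the constant hidden in the $L^\theta$ estimate of $\delta(X\tfrac{u}{\gamma})$ depends only on $n$, $p$, $\theta$ and on $\|\tfrac{u}{\gamma}\|_{D_{p\theta/(p-\theta)}(\delta)}$, which is finite thanks to Proposition \ref{hpg}.

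I do not anticipate a conceptual obstacle: all the heavy lifting has already been done. The one delicate point is the consistency of the exponents — one must verify that $\theta$ can indeed be chosen both larger than $p'$ (so that H\"older in the proof of Lemma \ref{l46} works) and strictly smaller than $\tfrac{np}{n+2p}$ (so that $\tfrac{p\theta}{p-\theta} < \tfrac{n}{2}$ and Proposition \ref{hpg} applies), which is precisely where the assumption $p > \tfrac{n}{n-2}$ is used, since it is equivalent to $\tfrac{np}{n+2p} > p'$.
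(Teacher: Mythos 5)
Your route is the same as the paper's: the paper also obtains \eqref{e:deriv-F} by repeating for $\delta\bigl(X\tfrac{u}{\gamma}\bigr)$ the argument of Propositions \ref{p41} and \ref{p2.1}, with Lemma \ref{l46} supplying the required integrability, and it derives \eqref{e2.5} from the product rule $\delta\bigl(X\tfrac{u}{\gamma}\bigr)=X\delta\bigl(\tfrac{u}{\gamma}\bigr)-\langle DX,\tfrac{u}{\gamma}\rangle_H$ followed by H\"older --- which is exactly the estimate you import from inside the proof of Lemma \ref{l46}. So structurally the proposal matches the intended proof.

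There is, however, one concrete error, and it sits precisely at the point you single out as the delicate one. The equivalence you assert --- that $p>\tfrac{n}{n-2}$ is equivalent to $\tfrac{np}{n+2p}>p'$ --- is false: a direct computation gives $\tfrac{np}{n+2p}>\tfrac{p}{p-1}\iff n(p-1)>n+2p\iff p>\tfrac{2n}{n-2}$. Hence for $\tfrac{n}{n-2}<p\le\tfrac{2n}{n-2}$ the interval $\bigl(p',\tfrac{np}{n+2p}\bigr)$ from which you propose to pick $\theta$ is empty (take $n=3$, $p=4$: then $p'=\tfrac43$ while $\tfrac{np}{n+2p}=\tfrac{12}{11}$), so the proof as written does not cover that range of $p$. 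The repair is to drop the lower constraint $\theta>p'$: nothing in your argument actually uses it. The two H\"older applications in the proof of Lemma \ref{l46} require only $\theta>1$ (so that $\theta'<\infty$) and $\theta<p$, while the constraint $\tfrac{p\theta}{p-\theta}<\tfrac{n}{2}$ coming from Proposition \ref{hp:1} gives $\theta<\tfrac{np}{n+2p}$, which is automatically smaller than $p$. The window $1<\theta<\tfrac{np}{n+2p}$ is nonempty exactly when $p>\tfrac{n}{n-2}$, i.e.\ under the stated hypothesis; with that correction the rest of your argument (boundedness and continuity of $G_X$, then the fundamental theorem of calculus) goes through as you describe.
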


\begin{proof}
The proof of the first part is a straightforward extension of the computation of previous section, by taking into account the integrability of $X \frac{u}{\gamma}$ provided in Lemma \ref{e:2.3}.

We consider further the estimate \eqref{e2.5}. 
By the integration by parts formula for Malliavin derivative,
\begin{align*}
|F_X'(r)| \le \bE \left[ \left( |X \delta\left(\tfrac{u}{\gamma}\right) | + | \langle DX, \frac{u}{\gamma} \rangle_H | \right)  \uno_{\{g > r\}}  \right]
\end{align*}
and the thesis follows by H\"older's inequality and Hypothesis \ref{hp:1}.
\end{proof}

Actually, the existence of a continuous density for the functional $g$ implies that we can write formula \eqref{e:i2} as follows (we use a probabilistic notation, since it seems more expressive)
\begin{align}\label{e:sec-vers-der}
F_X(r) =  \bE[X \uno_{\{g < r\}}] = \int_{-\infty}^r \bE[X \mid g = s] f_1(s) \, {\rm d}s 
\end{align}
Therefore, by comparing with the results in Proposition \ref{p2.4}, we obtain that the identity
\begin{align}\label{e:ter-vers-der}
f_X(s) = F'_X(s)= \bE[X \mid g = s] f_1(s)
\end{align}
holds for almost every $s$ and, since the left-hand side is continuous, we conclude that there exists a continuous version of the function $s \mapsto \bE[X \mid g = s] f_1(s)$.

{
Notice that expression \eqref{e:ter-vers-der} for $F_X'(r)$ is more significant than \eqref{e:deriv-F}. In particular it provides "a candidate" to be the surface measure. As we will formally prove in what follows this candidate is given by $f_1(r)\, {\rm d}\mu$, where $f_1(r)$ is the density function of $g$. This also highlight the dependence of the surface measure by the kind of functional $g$ we consider.
}

\subsection{The surface measure}\label{subs2.3}

The results in this section mimic the construction in \cite{DaPrato2018, Bonaccorsi2018} and we shall skip some minor detail. Notice however that these papers only address the Hilbert setting, while we work in the Banach space $E$.
\\
Let us notice that on the probability space $(E, \cE, \mu)$, identity \eqref{e:ter-vers-der} formally reads
\begin{align*}
F'_X(r) = \int_{\{g  = r\}} X(x) f_1(r) \, \mu({\rm d}x).
\end{align*}
We are interested in proving that there exists a surface measure $\sigma_r$ on the boundary surface $\{g = r\}$ such that previous expression simplifies to
\begin{align*}
F'_X(r) = \int_{\{g  = r\}} X(x) \, \sigma_r({\rm d}x).
\end{align*}
In order to achieve this results, we need to extend previous construction to the class of functionals $X \in UC_b$.

Since functions in $UC_b$ can be uniformly approximated by elements in $UC_b^1$ (see \cite[Section 2.2]{DaPrato2002}), for every $X \in UC_b$ there exists a sequence $X_n \in UC_b^1$ such that $X_n \to X$.
Moreover, since $UC_b^1 \subset \bD^{1,p}$ for every $p$, results in previous section applies to the elements of the approximating sequence. 

\begin{proposition}
For every $X \in UC_b^1$, $F_X(r)$ is continuously differentiable and there exists a constant $c > 0$ such that
\begin{align}\label{e:stima-X-C1}
|f_X(r)| \le c \|X\|_{\infty}, \qquad \text{where $\|X\|_\infty$ is the sup-norm in $E$.}
\end{align}
\end{proposition}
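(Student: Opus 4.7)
The plan is to deduce the statement from Proposition \ref{p2.4} and the disintegration identity \eqref{e:ter-vers-der}. The key point is that the Sobolev-norm estimate \eqref{e2.5} can be replaced by a sup-norm estimate by rerouting the derivative $f_X$ through its representation as a conditional expectation, so that the Malliavin smoothness of $X$ drops out of the final bound.

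To obtain continuous differentiability, I would note that $UC_b^1 \subset \bD^{1,p}$ for every $p > 1$ (as recalled in the paragraph preceding this proposition), so Proposition \ref{p2.4} applies with any fixed $p > \frac{n}{n-2}$ and yields that $F_X$ is continuously differentiable with
$$f_X(r) = \bE\!\left[\delta\!\left(X \tfrac{u}{\gamma}\right)\, \uno_{\{g>r\}}\right],$$
a function of $r$ which is continuous.

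For the sup-norm bound, I would invoke the identity \eqref{e:ter-vers-der},
$$f_X(r) = \bE[X \mid g = r]\, f_1(r).$$
A priori this equality holds only for almost every $r$, but since $f_X$ is continuous by the previous step and $f_1$ is continuous by Corollary \ref{c2.3}, the continuous version of the right-hand side must coincide with $f_X(r)$ for every $r > 0$. Using the contractivity $|\bE[X \mid g = r]| \le \|X\|_\infty$ of conditional expectation on $L^\infty$ and the boundedness of $f_1$ from Corollary \ref{c2.3}, one then gets
$$|f_X(r)| \le \|f_1\|_\infty \, \|X\|_\infty,$$
which is \eqref{e:stima-X-C1} with $c = \|f_1\|_\infty$.

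The only (mild) subtlety is the a.e.-to-pointwise upgrade in the second step; it is resolved by the continuity of both $f_X$ and $f_1$ together with the contraction property of conditional expectation. I therefore do not foresee any serious analytical obstacle in carrying out this proof, since all the heavy lifting — in particular the Malliavin condition on $g$ and the existence and continuity of the density $f_1$ — has already been performed in the preceding results.
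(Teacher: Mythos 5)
Your proposal is correct and follows essentially the same route as the paper: continuous differentiability from $UC_b^1 \subset \bD^{1,p}$ via Proposition \ref{p2.4}, and the sup-norm bound by writing $f_X(r) = \bE[X \mid g=r]\, f_1(r)$ as in \eqref{e:ter-vers-der} and using boundedness of the conditional expectation together with Corollary \ref{c2.3}. Your explicit handling of the a.e.-to-pointwise upgrade via continuity is a detail the paper leaves implicit, but the argument is the same.
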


\begin{proof}
Only \eqref{e:stima-X-C1} needs to be proven. By \eqref{e:ter-vers-der} we get
\begin{align*}
|f_X(r)| \le \left|\bE[X \mid g = r]\right| f_1(r) \le \bE|X| f_1(r) \le c \, \|X\|_{\infty}
\end{align*}
since we know, by Corollary \ref{c2.3}, that $f_1$ is a continuous and bounded function.
\end{proof}

By an approximation argument we obtain that the same result holds for $X \in UC_b$.

\begin{proposition}\label{p2.6}
For any $X \in UC_b$ the functional $F_X(r)$ is continuously differentiable and there exists a constant $c > 0$ such that
\begin{align}\label{e:stima-X}
|f_X(r)| \le c \|X\|_{\infty}.
\end{align}
\end{proposition}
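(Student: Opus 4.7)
The plan is a direct approximation argument, exploiting the linearity of $X \mapsto F_X$ together with the uniform bound \eqref{e:stima-X-C1} available in $UC_b^1$.

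First, given $X \in UC_b$, I would fix a sequence $\{X_n\} \subset UC_b^1$ with $\|X_n - X\|_\infty \to 0$ (the existence of such an approximation is exactly the one recalled just before the statement of the previous proposition, coming from \cite{DaPrato2002}). Since $|F_{X_n}(r) - F_X(r)| \le \|X_n - X\|_\infty$ for every $r \ge 0$, the functions $F_{X_n}$ converge uniformly on $\bR_+$ to $F_X$.

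Next, I would use linearity of the map $X \mapsto F_X$ (clear from the definition \eqref{e:i2}) together with the bound \eqref{e:stima-X-C1} applied to the difference $X_n - X_m \in UC_b^1$: this yields
\begin{equation*}
|F'_{X_n}(r) - F'_{X_m}(r)| = |F'_{X_n - X_m}(r)| \le c \, \|X_n - X_m\|_\infty,
\end{equation*}
so $\{F'_{X_n}\}$ is Cauchy in the sup-norm on $\bR_+$ and hence converges uniformly to some continuous function $\Phi$. Standard calculus (uniform convergence of derivatives plus pointwise convergence of primitives) then gives that $F_X$ is continuously differentiable with $F_X' = \Phi$.

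Finally, the bound \eqref{e:stima-X} is obtained by passing to the limit in $|F'_{X_n}(r)| \le c\|X_n\|_\infty$, using $\|X_n\|_\infty \to \|X\|_\infty$. The only point that could cause trouble is verifying that the approximation provided in \cite{DaPrato2002} truly lands in $UC_b^1$ with uniform convergence (and not merely pointwise or in some weaker topology); but this is precisely the content cited, so the argument is essentially routine once the previous proposition is in hand. The independence of the constant $c$ from $X$ in \eqref{e:stima-X-C1} is what makes the whole approximation scheme work.
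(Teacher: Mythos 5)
Your argument is correct and is precisely the approximation scheme the paper invokes (the paper states Proposition \ref{p2.6} with only the remark ``by an approximation argument'' and omits the details): uniform approximation of $X$ by $X_n \in UC_b^1$, linearity of $X \mapsto F_X$ plus the $X$-independent constant in \eqref{e:stima-X-C1} to make $\{F'_{X_n}\}$ uniformly Cauchy, and the standard theorem on uniform convergence of derivatives. No gaps; this matches the intended proof.
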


We are finally in the position to conclude the proof of the main result of Theorem \ref{t1}.
For fixed $r$, consider a sequence $\varepsilon_n \to 0$ and define the family of measures
\begin{align*}
\sigma_n := \frac{1}{\varepsilon_n} \uno_{\{r < g \le r + \varepsilon_n\}} \mu.
\end{align*}
For any $X \in UC_b$ we have
\begin{align*}
\int_E X(x) \, \sigma_n({\rm d}x) = \int_E \frac{1}{\varepsilon_n} \uno_{\{r < g \le r + \varepsilon_n\}} X(x) \, \mu({\rm d}x)
= \frac{1}{\varepsilon_n} \left[ F_X(r+ \varepsilon_n) - F_X(r)\right];
\end{align*}
thanks to Proposition \ref{p2.6} we can pass to the limit in the above formula to get
\begin{align*}
\lim_{n \to \infty} \int_E X(x) \, \sigma_n({\rm d}x) = F_X'(r).
\end{align*}
By an application of the Prokhorov's theorem (see \cite[Corollary 8.6.3]{Bogachev2007}) we finally obtain that the sequence $\sigma_n$ converges to a measure $\sigma_r$ such that
\begin{align*}
F_X'(r) = \int_E X(x) \, \sigma_r({\rm d}x).
\end{align*}
Finally, by taking suitable approximations of $X = \uno_{\{|g-r|>\delta\}}$ we check that $\sigma_r$ is concentrated on $\{g = r\}$ and the proof is complete.

%\newpage
{\color{black}
\subsection{The integration by parts formula}

In this section we discuss the integration by parts formula on the level sets of the mapping $g$. Similar results have been obtained by \cite{Celada2014, Addona2018} with different techniques, see also \cite[Section 4]{Bonaccorsi2018}.

\begin{proposition}\label{pr:ibpf}
Let $r > 0$ be fixed. For any $X \in \bD^{1,p}$ and $h \in H$ it holds
\begin{equation}\label{e.ibp3}
\int_{\{g=r\}} X \langle D g, h \rangle_H \, \sigma_r({\rm d}x) = \int_{\{g < r\}} \left[ X W(h) - \langle DX,h \rangle_H \right] \, \mu({\rm d}x)
\end{equation}
where $W(h)$ is the Gaussian random variable defined in \eqref{e:wie1}. %by $W(h) = \int_0^1 \langle h(t), {\rm d}B(t) \rangle_{\bR^n}$.
\end{proposition}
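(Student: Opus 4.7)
The plan is to derive \eqref{e.ibp3} by smoothly truncating the indicator $\uno_{\{g\le r\}}$, applying the classical Malliavin integration by parts identity $\bE[\langle DF,h\rangle_H] = \bE[FW(h)]$ to the truncation, and letting the truncation parameter tend to zero while invoking the characterization \eqref{charact} of $\sigma_r$.

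First, I would fix a family $\{\psi_\varepsilon\}_{\varepsilon>0}\subset C^1(\bR)$ with $0\le \psi_\varepsilon\le 1$, $\psi_\varepsilon(y)=1$ for $y\le r$, and $\psi_\varepsilon(y)=0$ for $y\ge r+\varepsilon$, so that $\psi_\varepsilon(g)\to \uno_{\{g\le r\}}$ pointwise and $\psi'_\varepsilon$ is a nonpositive approximation of $-\delta_r$. By the chain rule and Lemma \ref{derg}, $\psi_\varepsilon(g)\in \bD^{1,q}$ for every $q>1$; being bounded, the product $F_\varepsilon := X\psi_\varepsilon(g)$ lies in $\bD^{1,p}$ with $DF_\varepsilon = \psi_\varepsilon(g)\,DX + X\,\psi'_\varepsilon(g)\,Dg$, and the IBP identity applied to $F_\varepsilon$ yields
\begin{equation}
\label{e:plan-ibp}
\bE\bigl[\psi_\varepsilon(g)\langle DX,h\rangle_H\bigr] + \bE\bigl[X\psi'_\varepsilon(g)\langle Dg,h\rangle_H\bigr] = \bE\bigl[X\psi_\varepsilon(g) W(h)\bigr].
\end{equation}

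The first and third terms of \eqref{e:plan-ibp} pass to the limit $\varepsilon\to 0$ by dominated convergence, converging respectively to $\int_{\{g<r\}}\langle DX,h\rangle_H \,\mu({\rm d}x)$ and $\int_{\{g<r\}} X W(h) \,\mu({\rm d}x)$; here $\mu\{g=r\}=0$ by Corollary \ref{c2.3}, and integrability comes from H\"older combined with $W(h)\in L^s(E,\mu)$ for every $s\ge 1$. For the middle term, setting $Y := X\langle Dg,h\rangle_H$ and conditioning on $g$ gives
\begin{equation*}
\bE[\psi'_\varepsilon(g) Y] = \int_{\bR} \psi'_\varepsilon(s)\,\bE[Y\mid g=s]\,f_1(s)\,ds.
\end{equation*}
Provided the function $s\mapsto \bE[Y\mid g=s]\,f_1(s)$ is continuous in a neighbourhood of $r$, this integral tends to $-\bE[Y\mid g=r]\,f_1(r)$, which by \eqref{charact} is $-\int_{\{g=r\}} X\langle Dg,h\rangle_H\,\sigma_r({\rm d}x)$. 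Substituting into \eqref{e:plan-ibp} and rearranging produces the claim.

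The main obstacle is therefore verifying that $Y$ has the regularity required by Proposition \ref{p2.4}, namely $Y\in \bD^{1,q}$ for some $q>\tfrac{n}{n-2}$. The key observation is that by Lemma \ref{derg}, $\langle Dg,h\rangle_H = \sum_{i=1}^{n}\int_0^1 h^i(s)\int_s^1 B^i(t)\,dt\,ds$ is a first-order Wiener integral, hence a centered Gaussian random variable, and thus belongs to $\bD^{1,r}$ for every $r\ge 1$ with moments of all orders. The Malliavin product rule combined with H\"older's inequality then yields $Y\in \bD^{1,q'}$ for every $q'<p$; since the hypothesis $p>\tfrac{n}{n-2}$ is strict, such a $q'$ can be chosen above the threshold, and Proposition \ref{p2.4} supplies the required continuity of $s\mapsto \bE[Y\mid g=s]\,f_1(s)$.
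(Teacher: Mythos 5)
Your proposal is correct and shares the paper's skeleton: regularize the indicator of $\{g\le r\}$ by a Lipschitz truncation of $g$, apply the Malliavin integration by parts identity, and pass to the limit, with the bulk and $W(h)$ terms handled by dominated convergence. Where you genuinely diverge is in the treatment of the boundary term $\bE[X\psi'_\varepsilon(g)\langle Dg,h\rangle_H]$. The paper identifies its limit by observing that the measures $\frac{1}{\varepsilon}\uno_{(r-\varepsilon,r)}(g)\,\mu$ converge weakly to $\sigma_r$, ``proceeding as in Section \ref{subs2.3}''; that weak convergence, however, is only established there against test functions in $UC_b$, while $X\langle Dg,h\rangle_H$ is in general neither bounded nor continuous, so the paper's step is left implicit. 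You instead disintegrate with respect to the law of $g$, invoke the continuity of $s\mapsto \bE[Y\mid g=s]f_1(s)$ supplied by Proposition \ref{p2.4} and \eqref{e:ter-vers-der}, and then identify the limit with the surface integral via \eqref{charact}. This requires the extra verification that $Y=X\langle Dg,h\rangle_H\in\bD^{1,q'}$ for some $q'>\frac{n}{n-2}$, which you carry out correctly: by Lemma \ref{derg} the factor $\langle Dg,h\rangle_H$ is a Wiener integral (the paper's own remark writes it as $W(\tilde h)$), hence Gaussian with moments and Malliavin regularity of every order, and H\"older plus the strictness of $p>\frac{n}{n-2}$ closes the gap. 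The net effect is that your route is slightly longer but makes rigorous the one step the paper's proof leaves as an appeal to an earlier construction; the only cosmetic differences (one-sided mollification from above rather than below, and the product-rule form of the IBP identity rather than \eqref{e:ibp2}) are immaterial given the continuity of $f_Y$ at $r$.
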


\begin{proof}
The starting point is the integration by parts formula (compare \eqref{e:ibp1})
\begin{align}
\label{e:ibp2}
\bE \left[X \left\langle D Y, h \right\rangle_H \right] 
= \bE \left[ X Y \mathop{\delta}\left(h\right) - Y \left\langle D X, h \right\rangle_H \right];
\end{align}
which holds for random variables $X$ and $Y$ in the domain $\bD^{1,p}$ of the Malliavin derivative and $h \in H$.
\\
In a sense, we aim to apply this formula to the random variable $Y = \uno_{\{g < r\}}$, but this cannot be obtained directly due to the lack of regularity of this mapping.
We thus approximate $Y$ by the following procedure.

Let 
\begin{align*}
\theta_\varepsilon(a) = \displaystyle \frac{1}{\varepsilon} \int_{a}^{+\infty} \uno_{(r-\varepsilon, r)}(s) \, {\rm d}s, \qquad a > 0;
\end{align*}
$\theta_\varepsilon$ is a Lipschitz continuous function, hence the mapping $Y_\varepsilon = \theta_\varepsilon(g)$ is a smooth approximation of $Y$, in the sense that $Y_\varepsilon \to Y = \uno_{\{g \le r\}}$ in $L^2(E,\mu)$.

The right hand side of \eqref{e:ibp2}, with $Y_\varepsilon$ instead of $Y$, converges as $\varepsilon \downarrow 0$ to 
\begin{align*}
\int_{\{g < r\}} \left[ X W(h) - \langle DX,h \rangle_H \right] \, \mu({\rm d}x).
\end{align*}
On the other hand, we have
\begin{align*}
D Y_\varepsilon = \theta_\varepsilon'(g) \, D g = \frac{1}{\varepsilon} \uno_{(r-\varepsilon, r)}(g) \, D g
\end{align*}
hence
\begin{align*}
\bE \left[X \left\langle D Y_\varepsilon, h \right\rangle_H \right] = \frac{1}{\varepsilon} \int_H X \, \uno_{(r-\varepsilon, r)}(g) \langle D g,h \rangle_H \, \mu({\rm d}x).
\end{align*}
Proceeding as in Section \ref{subs2.3} we notice that
$\frac{1}{\varepsilon}\uno_{(r-\varepsilon, r)}(g) \, \mu$ converges to the measure $\sigma_r$ concentrated on $\{g = r\}$. We have thus proved the thesis.
\end{proof}

\begin{remark}
\em In the special case $X=1$, formula \eqref{e.ibp3} reads
\begin{align*}
F_{W(h)}(r) = \int_{\{g < r\}} W(h) \, \mu({\rm d}x) = \int_{\{g=r\}}  \langle D g, h \rangle_H \, \sigma_r({\rm d}x), \qquad h \in H.
\end{align*}
This is a sort of {\em divergence theorem} (in infinite dimensions) for the vector $h$; we remark that similar results are already present in the literature, compare for instance \cite{Goodman1972}.  
\end{remark}

\begin{remark}
\em Let us further notice that $D g$ is explicitly known (see formula \eqref{e.dg}), hence
\begin{align*}
\langle D g, h \rangle_H = \sum_{i=1}^n \int_0^1 h_i(s) \int_s^1 B^i(t) \, {\rm d}t \, {\rm d}s
\end{align*}
Let us define
\begin{align*}
\tilde h(t) = \int_0^1 (1 - t \vee r) h(r) \, {\rm d}r, \qquad t \in (0,1).
\end{align*}
Then it holds
\begin{align*}
\langle D g, h \rangle_H = %\mathop{\rm Tr}(
W(\tilde h)
%)
.
\end{align*}
\end{remark}

}

\section{Existence of the surface measure for sets defined by the solution of gradient systems}\label{sez5}

In this section we extend previous results to cover the case of a (multidimensional) gradient system SDE (see \cite{Hasminskii1980}). % with additive noise.
Let $V: \bR^n \to \bR$ be a potential energy function; % and $a(x) = - \nabla V(x)$.
we assume that
\begin{equation}
\label{e5.4}
V \in C^3_b(\bR^n),
\end{equation}
i.e., it is continuous and bounded together with its first three derivatives.
\\
Then we define $u$ to be the solution of the following equation:
\begin{align}\label{e5.1}
{\rm d}u(t) = - \nabla V(u(t)) \, {\rm d}t + {\rm d}B(t), \qquad u_0 = 0 \in \bR^n
\end{align}
Under our assumptions, the solution $u$ belongs to $L^2(E,\mu;E)$ (notice that we can solve the equation in a pathwise sense).

Recall from Corollary \ref{c2.3} that $g(B)$ has a density function $f_1(r)$ that is continuous and bounded for $r   > 0$.
In this section we aim to study the same property for the random variable $g(u)$, where $u$ is the solution of the equation \eqref{e5.1}.

\begin{theorem}\label{t5.1}
{Let $n \ge 3$}. The cumulative distribution function of $g(u)$ admits a probability density function
\begin{align*}
\varphi_1(r) = \int_{\{g(B) = r\}} \rho_1(B)^{-1} \, \sigma_r({\rm d}x)
\end{align*}
where for every process $h \in L^2(E,\mu;E)$ we let $\rho_1(h)$ be the Girsanov's density defined by
\begin{align}\label{e5.3}
\rho_1(h) = \exp\left( \int_0^1 \langle \nabla V(h(s)), {\rm d}B(s) \rangle - \frac12 \int_0^1 |\nabla V(h(s))|^2 \, {\rm d}s \right),
\end{align}
and the support of $\sigma_r$ is concentrated on $\{g(B) = r\}$.
\end{theorem}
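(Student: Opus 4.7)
The plan is to use a Girsanov change of measure to reduce the problem to the Brownian case already handled by Theorem \ref{t1} and then read off the density of $g(u)$ via the explicit characterization \eqref{charact}.

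First, since $V \in C^3_b(\bR^n)$, $\nabla V$ is bounded, so Novikov's criterion is trivially satisfied; hence $\rho_1(u)$ (with $h = u$ in \eqref{e5.3}) is a genuine $\mu$-martingale with $\bE^\mu[\rho_1(u)] = 1$. Set $d\bQ := \rho_1(u)\, d\mu$. By Girsanov's theorem applied to the SDE \eqref{e5.1}, under $\bQ$ the process $u$ is a standard $n$-dimensional Brownian motion. Hence for any bounded measurable $F: \bR_+ \to \bR$,
\[
\bE^\mu[F(g(u))] = \bE^\bQ\bigl[\rho_1(u)^{-1} F(g(u))\bigr].
\]

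The next step is to rewrite the right-hand side as a $\mu$-expectation of a functional of $B$. Using the identity $dB = du + \nabla V(u)\, dt$, valid under $\bQ$ (where $u$ is a Brownian motion), together with an application of Itô's formula to $V(u)$, one re-expresses $\rho_1(u)^{-1}$ as a bounded, pathwise functional of $u$ involving only $V(u(1)) - V(0)$, $\int_0^1 \Delta V(u)\, ds$ and $\int_0^1 |\nabla V(u)|^2\, ds$; boundedness is inherited from $V, \nabla V, \Delta V$ all being bounded. Since $u \sim \mu$ under $\bQ$, the corresponding Itô formula for $V(B)$ under $\mu$ then identifies the resulting functional with $\rho_1(B)^{-1}$, and we obtain
\[
\bE^\mu[F(g(u))] = \bE^\mu\bigl[\rho_1(B)^{-1}\, F(g(B))\bigr],
\]
with $\rho_1(B)^{-1}$ a bounded random variable on $(E, \cE, \mu)$ lying in $\bD^{1,p}$ for every $p > 1$.

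The final step applies the characterization \eqref{charact} with $X = \rho_1(B)^{-1}$, which is admissible since $\rho_1(B)^{-1} \in UC_b \cap \bD^{1,p}$ for $p > n/(n-2)$:
\[
\bE^\mu\bigl[\rho_1(B)^{-1}\, F(g(B))\bigr] = \int_0^\infty F(r)\,\biggl(\int_{\{g(B)=r\}} \rho_1(B)^{-1}\, \sigma_r(dx)\biggr)\, dr.
\]
Comparing the two expressions for $\bE^\mu[F(g(u))]$ identifies the density $\varphi_1(r) = \int_{\{g(B)=r\}} \rho_1(B)^{-1}\, \sigma_r(dx)$ of $g(u)$, which is automatically supported on $\{g(B)=r\}$ via $\sigma_r$. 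Continuity and boundedness of $\varphi_1$ follow from Corollary \ref{c2.3} (continuity and boundedness of $f_1$) together with the uniform boundedness of $\rho_1(B)^{-1}$.

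The main obstacle is the pathwise rewriting of $\rho_1(u)^{-1}$ as a bounded functional of the $\bQ$-Brownian motion $u$ and the subsequent matching with $\rho_1(B)^{-1}$ via Itô's formula for $V(u)$ and $V(B)$. This is a delicate Itô-Girsanov bookkeeping step that also yields the boundedness and $\bD^{1,p}$-regularity of $\rho_1(B)^{-1}$ required to invoke \eqref{charact}; all these properties ultimately rest on the $C^3_b$-regularity assumption on $V$.
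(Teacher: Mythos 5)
Your proposal is correct and follows essentially the same route as the paper: a Girsanov change of measure reducing $g(u)$ to $g(B)$, an It\^o-formula rewriting of $\rho_1(u)^{-1}$ as a bounded pathwise functional (the paper's Lemma \ref{l5.1}), verification that $\rho_1(B)^{-1}$ is an admissible integrand (the paper proves $\rho_1(B)^{-1} \in UC_b$, which already suffices for Theorem \ref{t1}; your additional $\bD^{1,p}$ claim is not needed), and then an appeal to the characterization of $\sigma_r$ to read off $\varphi_1$. The only cosmetic difference is that the paper phrases the last step through $\Phi_X(r) = F_{X\rho_1(B)^{-1}}(r)$ and Proposition \ref{p2.4} rather than through \eqref{charact} directly.
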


\subsection{Change of measure}

First, we notice that our assumptions on $V$ implies that
\begin{align*}
\sup_{t \in [0,1]} \bE\left( \exp\left( |\nabla V(u(t))|^2 \right) \right) < +\infty,
\end{align*}
therefore by \cite[Theorem 10.14 \& Proposition 10.17]{DaPrato1992} we get that 
the process 
\begin{align*}
u(t) = B(t) - \int_0^t \nabla V(u(s)) \, {\rm d}s
\end{align*}
is a Brownian motion in $(E, \cE, \nu)$, where $\nu$ is a centered Gaussian measure such that
\begin{align*}
{{\rm d}\nu} = \rho_1(\cdot) \, {{\rm d}\mu}.
\end{align*}
Let $F: E \to \bR$ be a bounded and Borel function; then we have  that
\begin{align*}
\bE_\nu[F({u})] = \bE_\mu[F(B)].
\end{align*}

\begin{lemma}
\label{l5.1}
The following representation of $\rho_1(u)$ holds:
\begin{equation}
\label{e5.2}
\rho_1(u) = \exp\left( V(u(1)) + \frac12 \int_0^1 |\nabla V(u(t)) |^2 \, {\rm d}t - \frac12 \int_0^1 \mathop{\rm Tr}(\nabla^2 V(u(t))) \, {\rm d}t 
 \right).
\end{equation}
\end{lemma}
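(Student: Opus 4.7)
The plan is to rewrite $\rho_1(u)$ by eliminating the stochastic differential ${\rm d}B$ in favor of ${\rm d}u$, and then to apply It\^o's formula to the smooth function $V$.

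First, I would use the SDE \eqref{e5.1}, which gives ${\rm d}B(t) = {\rm d}u(t) + \nabla V(u(t)) \, {\rm d}t$, and substitute into the definition \eqref{e5.3} of $\rho_1(\cdot)$ evaluated at $h = u$. This yields
\begin{align*}
\rho_1(u) = \exp\left( \int_0^1 \langle \nabla V(u(s)), {\rm d}u(s) \rangle + \int_0^1 |\nabla V(u(s))|^2 \, {\rm d}s - \frac12 \int_0^1 |\nabla V(u(s))|^2 \, {\rm d}s \right),
\end{align*}
so that the argument of the exponential equals $\int_0^1 \langle \nabla V(u(s)), {\rm d}u(s) \rangle + \frac12 \int_0^1 |\nabla V(u(s))|^2 \, {\rm d}s$.

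Next, I would apply It\^o's formula to $V(u(t))$. Since $V \in C^3_b(\bR^n)$ by \eqref{e5.4} and the martingale part of $u$ has quadratic variation $I \, {\rm d}t$ (because $u$ differs from $B$ by a process of bounded variation), this gives
\begin{align*}
V(u(1)) - V(u(0)) = \int_0^1 \langle \nabla V(u(s)), {\rm d}u(s) \rangle + \frac12 \int_0^1 \mathop{\rm Tr}(\nabla^2 V(u(s))) \, {\rm d}s.
\end{align*}
Solving for the stochastic integral $\int_0^1 \langle \nabla V(u(s)), {\rm d}u(s) \rangle$ and substituting into the expression for $\rho_1(u)$ yields formula \eqref{e5.2}, using $u(0) = 0$ and the implicit normalization $V(0) = 0$ (or, equivalently, absorbing the constant $V(0)$).

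The argument is essentially a direct computation; there is no substantial obstacle. The only point requiring care is the correct interpretation of $\int_0^1 \langle \nabla V(u(s)), {\rm d}u(s) \rangle$: since ${\rm d}u = -\nabla V(u) \, {\rm d}t + {\rm d}B$, the stochastic integral against ${\rm d}u$ decomposes into a Lebesgue integral and an It\^o integral against ${\rm d}B$, both well-defined under the boundedness assumptions on $\nabla V$ and $\nabla^2 V$. Once this is in place the identity follows by elementary algebra.
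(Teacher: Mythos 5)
Your proof is correct and follows essentially the same route as the paper: an application of It\^o's formula to $V(u(t))$ followed by algebraic substitution into the Girsanov density \eqref{e5.3}; rewriting ${\rm d}B$ in terms of ${\rm d}u$ first is only a cosmetic reordering of the same computation. You are also right to flag the implicit normalization $V(0)=0$, a point the paper's own proof silently glosses over when it writes $V(u(1))$ in place of $V(u(1))-V(u(0))$.
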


\begin{proof}
Let us compute the It\^o differential of $V(u)$:
\begin{align*}
{\rm d}V(u(t)) &= \langle \nabla V(u(t)), [-\nabla V(u(t)) \, {\rm d}t + {\rm d}B(t)] \rangle + \frac12 \mathop{\rm Tr}(\nabla^2 V(u(t))) \, {\rm d}t
%\sum_{i=1}^n \partial_{x_i}V(X(t)) \, {\rm d}X_i(t) + \frac12 \sum_{i,j=1}^n \partial_{x_i} \partial_{x_j} V(X(t)) \, {\rm d}\langle X \rangle_{ij}(t)
%\\
%&= \left[ - \sum_{i=1}^n a_i(X(t))^2 - \frac12 \sum_{i=1}^n \partial_{x_i} a_i(X(t)) \right] \, {\rm d}t - \sum_{i=1}^n a_i(X(t)) \, {\rm d}B^i(t)
\end{align*}
Therefore, using the integral form of previous differential and recalling that $u(0) = 0$, we get
\begin{align*}
V(u(1)) = - \int_0^1 |\nabla V(u(t)) |^2 \, {\rm d}t +
\int_0^1 \langle \nabla V(u(s)), {\rm d}B(s) \rangle  + \frac12 \int_0^1 \mathop{\rm Tr}(\nabla^2 V(u(t))) \, {\rm d}t 
\end{align*}
We substitute this expression in \eqref{e5.3} to get the thesis.
\end{proof}

\begin{proposition}\label{p5.1}
The mapping $x \mapsto \rho_1(B)^{-1}(x)$ belongs to $UC_b$.
\end{proposition}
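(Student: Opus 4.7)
The plan is to rewrite $\rho_1(B)$ as an explicit continuous functional of the path $x \in E$, mirroring the Itô/Itô--Stratonovich argument already used for $u$ in Lemma~\ref{l5.1}, and then to read off boundedness and uniform continuity directly from the assumption $V \in C^3_b(\bR^n)$.

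\textbf{Step 1 (pathwise representation).} Applying Itô's formula to $V(B(t))$ (which is licit since $V \in C^2_b$) gives, $\mu$-a.s.,
\begin{equation*}
V(B(1)) = V(0) + \int_0^1 \langle \nabla V(B(s)), {\rm d}B(s) \rangle + \tfrac12 \int_0^1 \mathop{\rm Tr}(\nabla^2 V(B(s))) \, {\rm d}s .
\end{equation*}
Solving for the stochastic integral and substituting into the definition \eqref{e5.3} with $h=B$, we obtain
\begin{equation*}
\rho_1(B) = \exp\!\Big( V(B(1)) - V(0) - \tfrac12 \int_0^1 \mathop{\rm Tr}(\nabla^2 V(B(s))) \, {\rm d}s - \tfrac12 \int_0^1 |\nabla V(B(s))|^2 \, {\rm d}s \Big).
\end{equation*}
Equivalently, recalling $B(t)(x) = x(t)$, the function $\Phi: E \to \bR$ defined by
\begin{equation*}
\Phi(x) = \exp\!\Big( -V(x(1)) + V(0) + \tfrac12 \int_0^1 \mathop{\rm Tr}(\nabla^2 V(x(s))) \, {\rm d}s + \tfrac12 \int_0^1 |\nabla V(x(s))|^2 \, {\rm d}s \Big)
\end{equation*}
agrees with $\rho_1(B)^{-1}$ for $\mu$-almost every $x$; this is the candidate representative of $\rho_1(B)^{-1}$.

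\textbf{Step 2 (boundedness).} Since $V \in C^3_b(\bR^n)$, each of the quantities $V$, $|\nabla V|^2$ and $\mathop{\rm Tr}(\nabla^2 V)$ is uniformly bounded on $\bR^n$. Therefore the exponent in $\Phi(x)$ is bounded by a constant independent of $x \in E$, so $\Phi \in L^\infty(E)$ with an explicit sup-norm bound depending only on $\|V\|_\infty$, $\|\nabla V\|_\infty$ and $\|\nabla^2 V\|_\infty$.

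\textbf{Step 3 (uniform continuity).} I would verify that each of the three functionals entering the exponent is uniformly continuous from $(E,\|\cdot\|_\infty)$ to $\bR$. The evaluation $x \mapsto V(x(1))$ is Lipschitz because the point evaluation $E \to \bR^n$ is contractive and $V$ is Lipschitz (as $\nabla V$ is bounded). For $x \mapsto \int_0^1 |\nabla V(x(s))|^2 \, {\rm d}s$, boundedness of $\nabla V$ and Lipschitzness of $|\nabla V|^2$ (which follows from the bound on $\nabla^2 V$) give
\begin{equation*}
\Big| \int_0^1 |\nabla V(x(s))|^2 - |\nabla V(y(s))|^2 \, {\rm d}s \Big| \le C \, \|x-y\|_\infty ,
\end{equation*}
and the same estimate applies to $x \mapsto \int_0^1 \mathop{\rm Tr}(\nabla^2 V(x(s))) \, {\rm d}s$ using the bound on $\nabla^3 V$. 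Summing these three Lipschitz contributions and then composing with the exponential (which is uniformly continuous on any bounded subset of $\bR$, and the exponent is bounded by Step~2), we conclude that $\Phi$ lies in $UC_b(E)$, which is the claim.

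\textbf{Main obstacle.} There is essentially no conceptual difficulty; the only care required is in the justification of Step~1, namely that the stochastic integral appearing in \eqref{e5.3} can be replaced $\mu$-a.s.\ by the pathwise-continuous functional obtained through Itô's formula. Once this rewriting is in place, the regularity assumption $V \in C^3_b$ exactly provides the boundedness of $V$, $|\nabla V|^2$, $\mathop{\rm Tr}(\nabla^2 V)$ together with Lipschitz estimates for the last two, which is what makes the uniform-continuity argument work.
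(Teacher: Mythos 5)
Your proof is correct and follows essentially the same route as the paper: rewrite the stochastic integral in $\rho_1(B)$ via It\^o's formula as a pathwise functional of $x\in E$, then deduce boundedness and uniform (in fact Lipschitz) continuity of the exponent from $V \in C^3_b$. Your Step 1 is actually slightly more careful than the paper's, which transplants the formula of Lemma~\ref{l5.1} (derived for the drifted process $u$) directly to $B$ and thereby gets the wrong sign on the term $\tfrac12\int_0^1 |\nabla V(B(s))|^2\,{\rm d}s$; this discrepancy is immaterial to the conclusion, since every term in the exponent is bounded and Lipschitz in $\|\cdot\|_\infty$ either way.
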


\begin{proof}
By lemma \ref{l5.1}  we can write
\begin{align*}
\rho_1(B)^{-1} = \exp\left[-\left( V(B(1)) + \frac12 \int_0^1 |\nabla V(B(t)) |^2 \, {\rm d}t - \frac12 \int_0^1 \mathop{\rm Tr}(\nabla^2 V(B(t))) \, {\rm d}t  \right)\right].
\end{align*}
Then the assumption that $V \in C^3_b(\bR^n)$ implies that $\rho_1(B)^{-1}$ is bounded. Now, we exploit that $B$ is the canonical Brownian motion on the Wiener space $(E, \cE, \mu)$, hence
\begin{align*}
B(t)(x) - B(t)(y) = x(t) - y(t);
\end{align*}
notice again that the assumption on $V$ implies that the mappings on $E$ defined by 
\begin{align*}
x \mapsto V(x(1)), \qquad x \to \int_0^1 {\rm Tr} [\nabla^2 V(x(t))] \, {\rm d}t, \qquad x \to \int_0^1 |\nabla V(x(t))|^2 \, {\rm d}t
\end{align*}
are Lipschitz continuous. Therefore, if $\|x-y\|_\infty < \delta$, then $\left| \rho_1(B)^{-1}(x) - \rho_1(B)^{-1}(y) \right| \le e^{3L\delta}$ and the proof is complete.
\end{proof}

\subsection{The main result}
We have now all the ingredients to prove Theorem \ref{t5.1}.
The proof of the existence of the density for $g(u)$ can be obtained as a corollary to %Proposition \ref{p2.4} meglio dire 
the results of Section \ref{dif_sec}. 
To see this, we propose the following computation.
\\
Using Girsanov's transform we have
\begin{align*}
\mu(g(u) \le r) =& \bE_\mu[\uno_{\{g(u) \le r\}}] = \bE_\nu[\uno_{\{g(u) \le r\}} \, \rho_1(u)^{-1}]
\\
=& \bE_\mu[\uno_{\{g(B) \le r\}} \, \rho_1(B)^{-1}].
\end{align*}
More generally, it holds
\begin{align*}
\Phi_X(r) = \int_{\{g(u) \le r\}} X(x) \, \mu({\rm d}x) = F_{X \rho(B)^{-1}}(r).
\end{align*}

\begin{lemma}
The random variable $g(u)$, defined on the space $(E, \cE, \mu)$ with values in $\bR$, admits a probability density function with respect to the Lebesgue measure that is continuous and bounded.
\end{lemma}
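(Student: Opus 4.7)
The plan is to reduce the existence of a continuous and bounded density for $g(u)$ to the corresponding property for the Brownian case, via the Girsanov identity already established just above the statement. Specifically, the identity
\begin{align*}
\mu(g(u) \le r) = \bE_\mu[\uno_{\{g(B) \le r\}} \, \rho_1(B)^{-1}] = F_{\rho_1(B)^{-1}}(r)
\end{align*}
identifies the cumulative distribution function of $g(u)$ with $F_X$ evaluated at the specific random variable $X = \rho_1(B)^{-1}$. So the whole problem reduces to verifying that $\rho_1(B)^{-1}$ has enough regularity to trigger the machinery already developed in Section \ref{sez2}.

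First I would invoke Proposition \ref{p5.1}, which precisely gives $\rho_1(B)^{-1} \in UC_b$. With $X = \rho_1(B)^{-1} \in UC_b$ in hand, I would then apply Proposition \ref{p2.6}: this asserts that for any $X \in UC_b$ the map $r \mapsto F_X(r)$ is continuously differentiable, and its derivative satisfies the pointwise bound $|f_X(r)| \le c\,\|X\|_\infty$. Applied to our particular $X$, this yields at once that $\Phi_1(r) := \mu(g(u) \le r)$ is continuously differentiable on $(0,\infty)$, and that its derivative $\varphi_1(r) = f_{\rho_1(B)^{-1}}(r)$ is a continuous function bounded by $c\,\|\rho_1(B)^{-1}\|_\infty$, which is finite because $V \in C^3_b(\bR^n)$.

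Since $\varphi_1$ is continuous and bounded and $\Phi_1(r) = \int_{-\infty}^r \varphi_1(s)\,{\rm d}s$, we conclude that $\varphi_1$ is the probability density function of $g(u)$ with respect to the Lebesgue measure on $\bR_+$, and this concludes the proof.

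There is no real obstacle at this stage: all the heavy lifting has been done earlier (the Malliavin condition on $g$ proved in Proposition \ref{hpg}, the approximation argument carrying the differentiability result from $UC_b^1$ to $UC_b$ in Proposition \ref{p2.6}, and the explicit bounded-continuous representation of $\rho_1(B)^{-1}$ via the It\^o-transformed expression in Lemma \ref{l5.1}). The only thing worth double-checking is that the constant appearing in \eqref{e:stima-X} applied with $X = \rho_1(B)^{-1}$ is uniform in $r$, but this is exactly the content of Proposition \ref{p2.6}, so the argument is clean.
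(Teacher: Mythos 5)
Your proof is correct and follows essentially the same route as the paper: both reduce the claim via the Girsanov identity to the statement that $F_{\rho_1(B)^{-1}}$ is continuously differentiable with bounded derivative, using Proposition \ref{p5.1} to place $\rho_1(B)^{-1}$ in $UC_b$ and then the differentiability results of Section \ref{sez2} (the paper cites Theorem \ref{t1} and Proposition \ref{p2.4}, you cite Proposition \ref{p2.6}, which is the appropriate $UC_b$ version of the same machinery). No gaps.
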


\begin{proof}
Using Proposition \ref{p5.1} we are able to apply Theorem \ref{t1} to obtain that the distribution function $\Phi_1(r)$ of $g(u)$ admits a derivative
\begin{align*}
\varphi_1(r) = f_{\rho_1(B)^{-1}}(r) = \int_E \rho_1(B)^{-1} \, \sigma_r({\rm d}x)
\end{align*}
where, as stated in Theorem \ref{t1} the support of the measure $\sigma_r$ is concentrated on $\{g(B) = r\}$.
Now, the thesis follows from Proposition \ref{p2.4}.
\end{proof}

Next, we prove that there exists a surface measure on $\{g(u) = r\}$ for $r > 0$ that is the restriction of the Gaussian measure $\mu$ to the given surface.
%On the other hand, it holds that the measure
%$\theta_n = \rho_1(u) \sigma_n$ 
Proceeding as in Section \ref{sez2} we obtain that
%Let $\Phi_X(r) = \nu[ X \uno_{\{g(u) \le r\}}]$. It holds that $\Phi_X(r) = F_{X \rho(B)^{-1}}(r)$.
%In particular, 
\begin{align*}
\frac{1}{\epsilon_n}\left[\Phi_X(r+\epsilon_n) - \Phi(r)\right] = \int_E X(x) \frac{1}{\epsilon_n} \uno_{\{ r < g(u) \le r+\epsilon_n \}} \, \mu({\rm d}x)
\end{align*}
and we can pass to the limit in previous formula, since the left hand side converges to $\Phi'_X(r)=F'_{X \rho_1(B)^{-1}}(r)$ by Proposition \ref{p2.6}.
\\
Therefore, by mimicking the procedure in Section \ref{sez2} we get that the sequence of measures
\begin{align*}
\theta_n := \frac{1}{\epsilon_n} \uno_{\{ r < g(u) \le r+\epsilon_n \}} \, \mu({\rm d}x)
\end{align*}
converges to a measure $\theta_r$ and this measure is concentrated on $\{ g(u) = r\}$.
\\
In particular, 
\begin{align*}
\varphi_1(r) = \int_{\{ g(u) = r\}} \theta_r({\rm d}x) = \theta_r(\{ g(u) = r\}).
\end{align*}

\vfill\break

\end{document}